\theoremstyle{plain} 
\newtheorem{theorem}{\bf Theorem}[section]
\newtheorem{lemma}[theorem]{\bf Lemma}
\newtheorem{corollary}[theorem]{\bf Corollary}
\newtheorem{proposition}[theorem]{\bf Proposition}
\theoremstyle{definition} 
\newtheorem{remark}[theorem]{\bf Remark}
\newtheorem{example}[theorem]{\bf Example}
\def\address#1#2{\begingroup
	\noindent\parbox[t]{7.8cm}{%
		\small{\scshape\ignorespaces#1}\par\vskip1ex
		\noindent\small{\itshape E-mail address}%
		\/: #2\par\vskip4ex}\hfill%
	\endgroup}%
\def\address#1#2{\begingroup
	\noindent\parbox[t]{7.8cm}{%
		\small{\scshape\ignorespaces#1}\par\vskip1ex
		\noindent\small{\itshape E-mail address}%
		\/: #2\par\vskip4ex}\hfill%
	\endgroup}%
\title{\sc Non-degenerate anisocurved surfaces in homogeneous 3-manifolds} 
\author{\sc Alma L. Albujer and F\'abio R. dos Santos
	%
	%
	\textsc{} 
}
\date{\today} 
\begin{document}

\maketitle

\abstract{In this manuscript we consider non-degenerate surfaces $\Sigma^2$ immersed in a $3$-dimensional homogeneous space $\mathbb{L}^{3}(\kappa,\tau)$ endowed with two different metrics, the one induced by the Riemannian metric of $\mathbb{E}^3(\kappa,\tau)$ and the non-degenerate metric inherited by the Lorentzian one of $\mathbb{L}^3(\kappa,\tau)$. Therefore, we have two different geometries on $\Sigma^2$ and we can compare them. In particular, we can consider the Gaussian curvature functions which respect to both metrics and study the geometry of the surfaces satisfying that both Gaussian curvature functions are opposite. We will call these surfaces anisocurved surfaces. In order to obtain our main results we also need to impose some extra assumptions regarding the extrinsic curvatures with respect to both metrics.}

\bigskip

\noindent 
	\textit{MSC2020:} Primary 53C42; Secondary 53C30, 53C50.

\smallskip

\noindent 
\textit{Keywords:} anisocurved surface, homogeneous manifold, Gaussian curvature, Hopf surface, helix surface.
\section{Introduction}
Kobayashi~\cite{Kobayashi:83} showed in 1983 that spacelike surfaces in the 3-dimensional Lorentz-Minkowski space $\mathbb{L}^3$ which are simultaneously minimal and maximal surfaces are necessarily open pieces of spacelike planes or of the helicoid, in the region where it is spacelike. Let us recall at this point that a spacelike surface in a Lorentzian manifold is an immersed surface such that the metric induced from the ambient space is a Riemannian one. Furthermore, a maximal surface in a Lorentzian manifold is a spacelike surface with identically zero mean curvature, whereas a minimal surface is a surface with zero mean curvature in a Riemannian manifold. Since any spacelike surface in $\mathbb{L}^3$ can also be endowed with a second Riemannian metric, the one induced from the Euclidean space $\mathbb{R}^3$, the problem studied by Kobayashi makes sense.

During the last years several authors have considered different extensions of the above result, generalizing it either to general dimension or to surfaces in different ambient spaces. Specifically, Kim, Koh, Shin and Yang~\cite{Kim:09} studied such surfaces in a Lorentzian product space of the type $M^2\times\mathbb{R}_1$, where $M^2$ is a Riemannian surface, and in particular they obtained a full classification in the case of $\mathbb{S}^2\times\mathbb{R}_1$ and $\mathbb{H}^2\times\mathbb{R}_1$. Furthermore, Shin, Kim, Koh, Lee and Yang~\cite{Shin:13} considered a similar problem in the Heisenberg space $\textrm{Nil}^3$, whose construction also admits a Lorentzian counterpart, classifying not only simultaneously minimal and maximal spacelike surfaces in $\textrm{Nil}^3$, but also non-degenerate ones.

More recently, Alías, Alarc\'on and dos Santos~\cite{Eva:19} generalized the problem to the study of simultaneosuly minimal and maximal non-degenerate hypersurfaces in an $(n+1)$-dimensional Lorentzian product space $M^n\times\mathbb{R}_1$, $M^n$ being a Riemannian manifold. Furthermore, the authors also got a relation between the Gaussian curvatures of a non-degenerate surface in a product space $M^2\times\mathbb{R}_1$ with respect to both metrics, the one inherited by $M^n\times\mathbb{R}_1$, $K_L$, and the one induced by the Riemannian product $M^n\times\mathbb{R}$, $K_R$. As a consequence, they obtained a classification of non-degenerate surfaces in a product space $M^2(c)\times\mathbb{R}_1$ such that $K_R=K_L=c$, where $M^2(c)$ is the $2$-dimensional space form of Gaussian curvature $c$.

Going a step further, it is possible to consider the well-known family of Bianchi-Cartan-Vranceanu (BCV) spaces $\mathbb{E}^{3}(\kappa,\tau)$ for any real constants $\kappa$ and $\tau$. It is well known that $\mathbb{E}^{3}(\kappa,\tau)$, with $\kappa\neq 4\tau^2$, models all the simply connected 3-dimensional manifolds with isometry group of dimension $4$. Observe that, in particular, $\mathbb{E}^3(\kappa,0)$ models the product space $M^2(\kappa)\times\mathbb{R}$ and, given $\tau\neq 0$, $\mathbb{E}^3(0,\tau)$ is isometric to the Heisenberg space. Finally, in the case where $\kappa$ and $\tau$ do not vanish, $\mathbb{E}^{3}(\kappa,\tau)$ is isometric to a Berger sphere when $\kappa>0$ and to the universal covering of the special linear group when $\kappa<0$. Furthermore, the spaces $\mathbb{E}^{3}(\kappa,\tau)$ have their Lorentzian counterparts $\mathbb{L}^{3}(\kappa,\tau)$ (see for instance~\cite{Lee:11}), being also 3-dimensional Lorentzian manifolds with isometry group of dimension 4 whenever $\kappa\neq -4\tau^2$. Therefore, the spaces $\mathbb{E}^{3}(\kappa,\tau)$ and $\mathbb{L}^{3}(\kappa,\tau)$ are a natural generalization of most of the previously considered ambient spaces. Since both spaces represent the same topological manifold, it makes sense to consider the same topological surface endowed with two different metrics.  The main objective in this manuscript is to study and give some geometric properties of non-degenerate surfaces in $\mathbb{L}^3(\kappa,\tau)$ having opposite Gaussian curvatures when considering those surfaces as immersions into $\mathbb{E}^3(\kappa,\tau)$ and $\mathbb{L}^3(\kappa,\tau)$. We call such surfaces anisocurved surfaces. Let us observe that the Gaussian curvature functions of any non-degenerate surface in $\mathbb{L}^3$, when considered it as an immersion into $\mathbb{R}^3$ and $\mathbb{L}^3$, have always different sign (see for instance~\cite[Proposition 4.10]{Eva:19}), so it makes more sense to consider opposite values of the Gaussian curvature functions rather than imposing them to coincide.

The manuscrit is organized as follows. In Section~\ref{sec:ELkt} a description of both Riemannian and Lorentzian BCV spaces is given and we present some nice relations between their first fundamental forms and their Levi-Civita connections. Later on, in Section~\ref{Sec:2} we study, from a local point of view, the geometry of non-degenerate surfaces in $\mathbb{L}^3(\kappa,\tau)$. Since such surfaces can be endowed with two different metrics, we have in fact two different semi-Riemannian surfaces, so we can compare their extrinsic geometries. Specifically, when making an appropriate choice of the normal vector fields, we obtain some interesting relations involving their normal vector fields and their shape operators. As a consequence of such relations, a non-degenerate surface is an helix surface in $\mathbb{L}^{3}(\kappa,\tau)$ if and only if it is also an helix surface as a surface in $\mathbb{E}^{3}(\kappa,\tau)$, see Corollary~\ref{cor:constantangle}.

In Section~\ref{sec:curv} we continue with the comparison started in Section~\ref{Sec:2}, but now we focus on the Gaussian and extrinsic curvatures related to both metrics. Let us recall that the extrinsic curvature of a non-degenerate surface is defined as the determinant of its shape operator. In this direction, we develop a relation between the extrinsic curvatures of a non-degenerate surface in $\mathbb{L}^3(\kappa,\tau)$ and in $\mathbb{E}^3(\kappa,\tau)$.

Finally, in Section~\ref{sec:5} we present our main results regarding non-degenerate anisocurved surfaces in $\mathbb{L}^3(\kappa,\tau)$. Furthermore, in order to get our results we need to impose an additional assumption on the extrinsic curvatures. In the particular case of timelike surfaces, we get a characterization of Hopf surfaces, see Theorem~\ref{teo:5.timelike}, and of timelike anisocurved helix surfaces, see Proposition~\ref{prop:5.timelike}. In the case of spacelike surfaces we obtain in Theorem~\ref{teo:5.spacelike} a non-existence result.

\section{On $3$-dimensional homogeneous manifolds}
\label{sec:ELkt}

Let $\kappa$ and $\tau$ be real numbers, and consider the region $\mathcal{D}$ of the Euclidean space $\mathbb{R}^{3}$ given by
\begin{equation}\label{eq:1.1}
	\mathcal{D}=\left\{
	\begin{array}{ccc}
		\mathbb{R}^{3} & \mbox{if} & \kappa\geq0, \\
		\mathbb{D}(2/\sqrt{-\kappa})\times\mathbb{R} & \mbox{if} & \kappa<0,
	\end{array}
	\right.
\end{equation}
where $\mathbb{D}(r)$ is the disk in $\mathbb{R}^2$ of radius $r$ centered at the origin. The so-called {\it Bianchi-Cartan-Vranceanu space} (BCV-space), $\mathbb{E}^3(\kappa,\tau)$, is the Riemannian manifold obtained when endowing $\mathcal{D}$ with the homogeneous Riemannian metric
\begin{equation}\label{eq:1.2}
	\langle\,,\rangle_{R}=\lambda^{2}(dx^{2}+dy^{2})+\left(dz+\tau\lambda(ydx-xdy)\right)^{2},\quad\lambda=\dfrac{1}{1+\frac{\kappa}{4}(x^{2}+y^{2})}.
\end{equation}

The BCV-spaces with $\kappa\neq 4\tau^2$ are the only simply connected $3$-dimensional homogeneous spaces with $4$-dimensional isometry group. It is well-known that, according to the constants $\kappa$ and $\tau$, they are classified as follows,
\begin{itemize}
	\item in the case $\tau=0$, $\mathbb{E}^3(\kappa,0)$ is isometric to the Riemannian product space $M^{2}(\kappa)\times\mathbb{R}$, where $M^{2}(\kappa)$ is the $2$-dimensional space form of Gaussian curvature $\kappa$, i.e. the Euclidean sphere $\mathbb{S}^{2}(1/\sqrt{\kappa})$ when $\kappa>0$, the Euclidean plane $\mathbb{R}^2$ when $\kappa=0$ or the hyperbolic plane $\mathbb{H}^{2}(1/\sqrt{-\kappa})$ when $\kappa<0$.
	\item in the case $\tau\neq0$, $\mathbb{E}^{3}(\kappa,\tau)$ is isometric to the Berger sphere $\mathbb{S}^{3}_{b}(\kappa,\tau)$ when $\kappa>0$, to the Heisenberg space ${\rm Nil}^{3}(\tau)$ when $\kappa=0$ and to the universal cover of the special linear group, $\widetilde{Sl_2}(\mathbb{R})(\kappa,\tau)$, when $\kappa<0$.
\end{itemize}

Related to any BCV-space, we can consider a Riemannian submersion $\pi:\mathbb{E}^{3}(\kappa,\tau)\rightarrow M^{2}(\kappa)$ with totally geodesic fibers and bundle curvature $\tau$. Furthermore $\xi=\partial_z$ is a unit Killing vector field on $\mathbb{E}^{3}(\kappa,\tau)$, which is vertical to the submersion $\pi$.

Otherwise, if we endow $\mathcal{D}$ with the Lorentzian metric
\begin{equation}\label{eq:1.9}
	\langle\,,\rangle_{L}=\lambda^{2}(dx^{2}+dy^{2})-\left(dz+\tau\lambda(ydx-xdy)\right)^{2},\quad\lambda=\dfrac{1}{1+\frac{\kappa}{4}(x^{2}+y^{2})},
\end{equation}
we obtain the so-called {\it Lorentz-Bianchi-Cartan-Vranceanu space} ($LBCV$-space), which has been denoted in the literature by $\mathbb{L}^{3}(\kappa,\tau)$ (see for instance~\cite{Lee:11,Lee:13}). In an analogous way to the Riemannian situation, it holds that $\xi=\partial_z$ is a timelike unit Killing vector field on $\mathbb{L}^{3}(\kappa,\tau)$, which is tangent to the fibers of the submersion $\pi$.

Let us assume now that $\tau\neq 0$. In this case, it is a standard fact (see for instance~\cite{Daniel:07}) that the following canonical frame $\{E_{1},E_{2},E_{3}\}$ on $\mathfrak{X}(\mathcal{D})$, defined as
\begin{equation}\label{eq:1.3}
	\left.
	\begin{array}{ccl}
		E_{1}&=&\lambda^{-1}\left(\cos(\sigma z)\partial_{x}+\sin(\sigma z)\partial_{y}\right)+\tau\left(x\sin(\sigma z)-y\cos(\sigma z)\right)\partial_{z},\\
		E_{2}&=&\lambda^{-1}\left(-\sin(\sigma z)\partial_{x}+\cos(\sigma z)\partial_{y}\right)+\tau\left(x\cos(\sigma z)+y\sin(\sigma z)\right)\partial_{z},\\
		E_{3}&=&\partial_{z},
	\end{array}
	\right.
\end{equation}
where $\sigma=\kappa/2\tau$, is an orthonormal frame with respect to $\langle\,,\rangle_R$, that is,
\begin{equation}\label{eq:1.4}
	\langle E_{1},E_{1}\rangle_{R}=\langle E_{2},E_{2}\rangle_{R}=\langle E_{3},E_{3}\rangle_{R}=1\quad\mbox{and}\quad\langle E_{i},E_{j}\rangle_{R}=0\quad\mbox{for}\quad i\neq j.
\end{equation}
Furthermore, it is easy to show that the frame $\{E_1,E_2,E_3\}$ defined as in~\eqref{eq:1.3} is also orthonormal in the corresponding $LBCV$-space, that is,
\begin{equation}\label{eq:1.10}
	\langle E_{1},E_{1}\rangle_{L}=\langle E_{2},E_{2}\rangle_{L}=1,\quad\langle E_{3},E_{3}\rangle_{L}=-1\quad\mbox{and}\quad\langle E_{i},E_{j}\rangle_{L}=0\quad\mbox{for}\quad i\neq j.
\end{equation}

Let us observe that, since $E_3=\xi$ is vertical to the submersion, $E_1$ and $E_2$ are horizontal vector fields both on $\mathbb{E}^{3}(\kappa,\tau)$ and on $\mathbb{L}^3(\kappa,\tau)$. Therefore, given $X\in\mathfrak{X}(\mathcal{D})$, we can write $X=X^{h}+X^{v}$, where $X^{h}\,\in\,\textrm{span}\{E_1,E_2\}$ and $X^{v}=\alpha E_3$ denote, respectively, the horizontal and vertical components of a vector field $X\in\mathfrak{X}(\mathcal{D})$. It follows immediately that, for all $X,Y\in\mathfrak{X}(\mathcal{D})$,
\begin{equation}\label{eq:1.7.bis}
	\alpha =\langle X,E_{3}\rangle_{R}=-\langle X,E_{3}\rangle_{L}\quad\mbox{and}\quad\langle X^{h},Y^{h}\rangle_{R}=\langle X^{h},Y^{h}\rangle_{L}.
\end{equation}
This decomposition provides an interesting relation between the first fundamental forms of both spaces.
\begin{lemma}\label{lem:1.1}
	Given $X,Y\in\mathfrak{X}(\mathcal{D})$, it holds
	\begin{equation}\label{eq:1.18}
		\langle X,Y\rangle_{R}+\langle X,Y\rangle_{L}=2\langle X^{h},Y^{h}\rangle_{R}=2\langle X^{h},Y^{h}\rangle_{L}
	\end{equation}
	and
	\begin{equation}\label{eq:1.19}
		\langle X,Y\rangle_{R}-\langle X,Y\rangle_{L}=2\langle X, E_{3}\rangle_{R}\langle Y, E_{3}\rangle_{R}=2\langle X, E_{3}\rangle_{L}\langle Y, E_{3}\rangle_{L}.
	\end{equation}
\end{lemma}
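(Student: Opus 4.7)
The plan is to reduce both identities to direct bookkeeping in the canonical frame $\{E_1,E_2,E_3\}$, exploiting the fact that this frame is simultaneously orthonormal for $\langle\,,\rangle_R$ (by~\eqref{eq:1.4}) and for $\langle\,,\rangle_L$ (by~\eqref{eq:1.10}), with the only discrepancy being the sign $\langle E_3,E_3\rangle_L=-1=-\langle E_3,E_3\rangle_R$. Since the two identities in the lemma are essentially the sum and the difference of the two metrics applied to $X$ and $Y$, everything should fall out from a single decomposition plus~\eqref{eq:1.7.bis}.

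Concretely, I would first write $X=X^h+\alpha E_3$ and $Y=Y^h+\beta E_3$ with $X^h,Y^h\in\mathrm{span}\{E_1,E_2\}$ and $\alpha,\beta\in C^\infty(\mathcal{D})$. Expanding bilinearly and using the orthonormality relations, I would obtain
\begin{equation}
\langle X,Y\rangle_R=\langle X^h,Y^h\rangle_R+\alpha\beta,\qquad \langle X,Y\rangle_L=\langle X^h,Y^h\rangle_L-\alpha\beta.
\end{equation}
Adding these two equations and applying~\eqref{eq:1.7.bis} to identify $\langle X^h,Y^h\rangle_R=\langle X^h,Y^h\rangle_L$ yields~\eqref{eq:1.18}. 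Subtracting them yields $\langle X,Y\rangle_R-\langle X,Y\rangle_L=2\alpha\beta$, and then substituting the identity $\alpha=\langle X,E_3\rangle_R=-\langle X,E_3\rangle_L$ from~\eqref{eq:1.7.bis} (and its analogue for $\beta$) gives~\eqref{eq:1.19}; note that the product of the two minus signs in the Lorentzian version cancels, which is why both right-hand expressions in~\eqref{eq:1.19} agree.

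There is no real obstacle here beyond keeping signs straight: the whole argument is a routine orthonormal-frame expansion, and the content of the lemma is exactly the observation that swapping from $\langle\,,\rangle_R$ to $\langle\,,\rangle_L$ only flips the sign of the vertical contribution. Accordingly, I would present the proof as a few lines of calculation immediately after recalling the decomposition $X=X^h+X^v$ and the identities collected in~\eqref{eq:1.7.bis}.
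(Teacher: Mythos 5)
Your proof is correct and follows exactly the route the paper takes: the paper's own proof simply states that the lemma "follows immediately" from~\eqref{eq:1.7.bis} and the orthonormality of $\{E_1,E_2,E_3\}$ with respect to both metrics, which is precisely the frame expansion you carry out explicitly. Your version just spells out the sum/difference bookkeeping that the authors leave to the reader.
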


\begin{proof}
	The proof follows immediately from~\eqref{eq:1.7.bis} and from the already mentioned fact that the canonical frame on $\mathfrak{X}(\mathcal{D})$, $\{E_1,E_2,E_3\}$, is orthonormal with respect to both metrics.
\end{proof}

With a straigthforward computation, we can check that
\begin{equation}\label{eq:1.5}
	[E_{1},E_{2}]=2\tau E_{3},\quad[E_{2},E_{3}]=\sigma E_{1}\quad\mbox{and}\quad[E_{3},E_{1}]=\sigma E_{2}.
\end{equation}
Furthermore, since $\xi$ is a Killing vector field on $\mathbb{E}^3(\kappa,\tau)$, it follows easily that for any vector field $X\in\mathfrak{X}(\mathcal{D})$ the following identity holds
\begin{equation}\label{eq:1.7}
	\overline{\nabla}^{R}_{X}\xi=\tau(X\wedge_{R}\xi),
\end{equation}
where $\overline{\nabla}^R$ stands for the Levi-Civita connection in $\mathbb{E}^{3}(\kappa,\tau)$ and $\wedge_R$ is defined by $\langle X\wedge_R Y,Z\rangle_R=\mathrm{det}(X,Y,Z)$ for any vector fields $X,Y,Z\in\mathfrak{X}(\mathcal{D})$, (see~\cite{Daniel:07}). From~\eqref{eq:1.5},~\eqref{eq:1.7} and Koszul's formula, the following expressions can be derived
\begin{equation}\label{eq:1.8}
	\left.
	\begin{array}{cclccclcccl}
		\overline{\nabla}^{R}_{E_{1}}E_{1}&=&0,  &\quad & \overline{\nabla}^{R}_{E_{1}}E_{2}&=&\tau E_{3}, &\quad & \overline{\nabla}^{R}_{E_{1}}E_{3}&=&-\tau E_{2},\\
		\overline{\nabla}^{R}_{E_{2}}E_{1}&=&-\tau E_{3},  &\quad & \overline{\nabla}^{R}_{E_{2}}E_{2}&=&0, &\quad & \overline{\nabla}^{R}_{E_{2}}E_{3}&=&\tau E_{1},\\
		\overline{\nabla}^{R}_{E_{3}}E_{1}&=&\dfrac{\kappa-2\tau^{2}}{2\tau}E_{2},  &\quad & \overline{\nabla}^{R}_{E_{3}}E_{2}&=&-\dfrac{\kappa-2\tau^{2}}{2\tau}E_{1}, &\quad & \overline{\nabla}^{R}_{E_{3}}E_{3}&=&0.
	\end{array}
	\right.
\end{equation}
Let us refer the reader to~\cite{Daniel:07} and~\cite{Espinar:11} for a deeper study of the geometry of the $BCV$-spaces.

Similarly, in the case of an $LBCV$-space $\mathbb{L}^3(\kappa,\tau)$, the timelike Killing vector field $\xi$ satisfies
\begin{equation}\label{eq:1.11}
	\overline{\nabla}^{L}_{X}\xi=-\tau(X\wedge_{L}\xi),
\end{equation} where
$\overline{\nabla}^{L}$ denotes the Levi-Civita connection in $\mathbb{L}^{3}(\kappa,\tau)$ and $\wedge_{L}$ is given by $\langle X \wedge_L Y,Z\rangle_L=\textrm{det}(X,Y,Z)$ for any $X,Y,Z\in\mathfrak{X}(\mathcal{D})$. In an analogous way as in the Riemannian case, from ~\eqref{eq:1.5},~\eqref{eq:1.11} and Koszul's formula we also get
\begin{equation}\label{eq:1.13}
	\left.
	\begin{array}{cclccclcccl}
		\overline{\nabla}^{L}_{E_{1}}E_{1}&=&0,  &\quad & \overline{\nabla}^{L}_{E_{1}}E_{2}&=&\tau E_{3}, &\quad & \overline{\nabla}^{L}_{E_{1}}E_{3}&=&\tau E_{2},\\
		\overline{\nabla}^{L}_{E_{2}}E_{1}&=&-\tau E_{3},  &\quad & \overline{\nabla}^{L}_{E_{2}}E_{2}&=&0, &\quad & \overline{\nabla}^{L}_{E_{2}}E_{3}&=&-\tau E_{1},\\
		\overline{\nabla}^{L}_{E_{3}}E_{1}&=&\dfrac{\kappa+2\tau^{2}}{2\tau}E_{2},  &\quad & \overline{\nabla}^{L}_{E_{3}}E_{2}&=&-\dfrac{\kappa+2\tau^{2}}{2\tau}E_{1}, &\quad & \overline{\nabla}^{L}_{E_{3}}E_{3}&=&0.
	\end{array}
	\right.
\end{equation}

We end this comparative study of the spaces $\mathbb{E}^3(\kappa,\tau)$ and $\mathbb{L}^3(\kappa,\tau)$ by establishing the following relation between the Levi-Civita connections $\overline{\nabla}^R$ and $\overline{\nabla}^L$.

\begin{lemma}\label{lem:1.2}
	The Levi-Civita connections of the homogeneous spaces $\mathbb{E}^{3}(\kappa,\tau)$ and $\mathbb{L}^{3}(\kappa,\tau)$ are related by
	\begin{equation*}
		\overline{\nabla}^{R}_{Y}X-\overline{\nabla}^{L}_{Y}X=W(X,Y),
	\end{equation*}
	where
	\begin{equation}\label{eq:1.22}
		W(X,Y)=2\tau\left(X^{h}\wedge_{R}Y^{v}-X^{v}\wedge_{R}Y^{h}\right)=2\tau\left(X^{h}\wedge_{L}Y^{v}-X^{v}\wedge_{L}Y^{h}\right)
	\end{equation}
	for all $X,Y\in\mathfrak{X}(\mathcal{D})$.
\end{lemma}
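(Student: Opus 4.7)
The plan is to exploit the tensorial character of the left-hand side and reduce the verification to the canonical frame $\{E_1,E_2,E_3\}$. For any two linear connections on the same manifold, the difference $T(X,Y):=\overline{\nabla}^{R}_{Y}X-\overline{\nabla}^{L}_{Y}X$ is a $(1,2)$-tensor field: $C^\infty(\mathcal{D})$-linearity in $Y$ is immediate from the linearity of both connections in the lower slot, while in $X$ the two Leibniz terms $Y(f)X$ coming from $\overline{\nabla}^{R}_{Y}(fX)$ and $\overline{\nabla}^{L}_{Y}(fX)$ cancel in the difference. The right-hand side $W(X,Y)$ is obviously tensorial, being built from the pointwise horizontal/vertical decomposition and a vectorial product. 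Hence it suffices to verify the identity for $X,Y\in\{E_1,E_2,E_3\}$.

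I would then subtract \eqref{eq:1.13} from \eqref{eq:1.8} entry by entry to obtain a $3\times 3$ table for $\overline{\nabla}^{R}_{E_j}E_i-\overline{\nabla}^{L}_{E_j}E_i$. The horizontal-horizontal entries ($i,j\in\{1,2\}$) vanish, while the remaining five non-trivial pairs—those involving at least one $E_3$—yield precisely the values $\pm 2\tau E_1$ or $\pm 2\tau E_2$, with the contributions $(\kappa\pm 2\tau^2)/(2\tau)$ from the $\overline{\nabla}_{E_3}$ row cleaning up to $\mp 2\tau$ after subtraction. On the other side, the decompositions $E_1^h=E_1,\ E_2^h=E_2,\ E_3^v=E_3$ (with the other components zero) reduce each $W(E_i,E_j)$ to a single vectorial product, which I compute from \eqref{eq:1.6}. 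For instance, $W(E_3,E_1)=-2\tau\,E_3\wedge_R E_1=-2\tau E_2$, matching the connection-difference table. Running through all nine pairs gives the identity.

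For the equivalence of the two presentations of $W(X,Y)$ using $\wedge_R$ or $\wedge_L$, I would note that in every summand one factor is purely horizontal and the other purely vertical, so the cross product will have trivial $E_3$-coefficient. Comparing \eqref{eq:1.6} with \eqref{eq:1.12}, the two vectorial products differ only in the sign of that $E_3$-coefficient; hence they coincide on the specific vectors that appear in $W$. The main obstacle is merely careful sign bookkeeping in the wedge-product formulas and the connection tables—no conceptual difficulty is expected, and the identity essentially just certifies that the $2\tau$-twist measured by $W$ encodes the entire discrepancy between $\overline{\nabla}^R$ and $\overline{\nabla}^L$.
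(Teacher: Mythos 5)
Your proof is correct and follows essentially the same route as the paper: both arguments reduce the identity to the canonical frame, compare the connection tables~\eqref{eq:1.8} and~\eqref{eq:1.13}, and match the nonzero differences against the vectorial product formula~\eqref{eq:1.6}, with the $\wedge_R$ versus $\wedge_L$ agreement following from the vanishing $E_3$-coefficient of a horizontal-wedge-vertical product. The only differences are cosmetic: you make the tensoriality of $\overline{\nabla}^{R}-\overline{\nabla}^{L}$ explicit and check all nine frame pairs, whereas the paper organizes the same computation by horizontal/vertical blocks (and note that the pair $(E_3,E_3)$ among your ``five'' actually yields $0$ on both sides, not $\pm 2\tau E_i$).
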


\begin{proof}
	Recalling that any $X\in\mathfrak{X}(\mathcal{D})$ admits the splitting $X=X^h+X^v$, where $X^h\in\mathrm{span}(E_1,E_2)$ and $X^v=\alpha E_3$, it follows easily from~\eqref{eq:1.8} and~\eqref{eq:1.13} that
	\begin{equation*}
		\overline{\nabla}^{R}_{Y^h}X^h-\overline{\nabla}^{L}_{Y^h}X^h=0 \quad \textrm{and} \quad \overline{\nabla}^{R}_{Y^v}X^v-\overline{\nabla}^{L}_{Y^v}X^v=0,
	\end{equation*}
	and consequently
	\begin{equation}\label{eq:1.23}
		\overline{\nabla}^{R}_{Y}X-\overline{\nabla}^{L}_{Y}X= \left(\overline{\nabla}^{R}_{Y^h}X^v-\overline{\nabla}^{L}_{Y^h}X^v\right)+\left(\overline{\nabla}^{R}_{Y^v}X^h-\overline{\nabla}^{L}_{Y^v}X^h\right).
	\end{equation}
	
	Let us now consider the first term in the right hand side of~\eqref{eq:1.23}. Using again~\eqref{eq:1.8} and~\eqref{eq:1.13} we get
	\begin{equation}\label{eq:1.24}
		\overline{\nabla}^{R}_{Y^h}X^v-\overline{\nabla}^{L}_{Y^h}X^v=2\tau\langle X^v,E_3\rangle_R\left(\langle Y^h,E_2\rangle_R E_1-\langle Y^h,E_1\rangle_R E_2\right).
	\end{equation}
	Furthermore, from the definition of $\wedge_R$ it yields that  $E_3\wedge_R Y^h=\langle Y^h,E_1\rangle_R E_2-\langle Y^h,E_2\rangle_R E_1$. Therefore,
	\begin{equation}\label{eq:1.25}
		\overline{\nabla}^{R}_{Y^h}X^v-\overline{\nabla}^{L}_{Y^h}X^v=-2\tau(X^v\wedge_RY^h).
	\end{equation}
	Analogously, considering the last term in~\eqref{eq:1.23} we obtain
	\begin{equation}\label{eq:1.26}
		\overline{\nabla}^{R}_{Y^v}X^h-\overline{\nabla}^{L}_{Y^v}X^h=2\tau(X^h\wedge_R Y^v).
	\end{equation}
	
	Finally, taking into account~\eqref{eq:1.25} and~\eqref{eq:1.26}, equation~\eqref{eq:1.23} reads
	\begin{equation}\label{eq:1.27}
		\overline{\nabla}^{R}_{Y}X-\overline{\nabla}^{L}_{Y}X=2\tau\left(X^h\wedge_R Y^v-X^v\wedge_R Y^h\right),
	\end{equation}
	and the proof follows by observing that $X^h\wedge_RY^v=X^h\wedge_LY^v$ for any $X,Y\in\mathfrak{X}(\mathcal{D})$.
\end{proof}

\begin{remark}
	Observe that in the case $\tau=0$, it is possible to reproduce the above study by considering in the product spaces $\mathbb{E}^3(\kappa,0)$ and $\mathbb{L}^3(\kappa,0)$ the orthonormal frame $\{\partial_x,\partial_y,\partial_z\}$ on $\mathfrak{X}(\mathcal{D})$ instead of $\{E_1,E_2,E_3\}$. In particular, $\partial_x$ and $\partial_y$ are horizontal vectors and one can also consider the vectorial products $\wedge_R$ and $\wedge_L$, giving their expressions in terms of $\{\partial_x,\partial_y,\partial_z\}$. This case was previously studied in~\cite{Eva:19}. Specifically, Lemma~\ref{lem:1.1} was obtained in~\cite{Eva:19} for $\tau=0$, and it was proved that, in this situation, the Riemannian and Lorentzian Levi-Civita connections coincide, so Lemma~\ref{lem:1.2} also holds.
\end{remark}

\section{Surfaces in homogeneous spaces}\label{Sec:2}

Let us recall that a smooth immersion $\psi:\Sigma^2\rightarrow\mathbb{L}^3(\kappa,\tau)$ of a connected surface $\Sigma^2$ is said to be a \emph{non-degenerate} surface if $\psi$ induces, from the Lorentzian metric of $\mathbb{L}^3(\kappa,\tau)$, a non-degenerate metric on $\Sigma^2$ which, as usual, is also denoted by $\langle,\rangle_L$. In this case there are just two possibilities, either the metric induced on $\Sigma^2$ is a Riemannian one, and in this case we say that $\Sigma^2$ is a \emph{spacelike} surface, or it is Lorentzian, and $\Sigma^2$ is said to be a \emph{timelike} surface. Equivalently, $\Sigma^2$ is a spacelike (timelike, resp.) surface if and only if for every $p\in\Sigma^2$ the tangent plane $\psi_\ast(T_p\Sigma)$ is a spacelike (timelike, resp.) plane.

In the case when $\Sigma^{2}$ is spacelike, since $\xi$ is a unit timelike vector field globally defined on $\mathbb{L}^{3}(\kappa,\tau)$, there exists a unique global timelike unit normal vector field on $\Sigma^{2}$, $N_{L}$, which is in the same time-orientation as $\xi$, and hence we may assume that $\Sigma^{2}$ is oriented by $N_{L}$. As a consequence of the backwards Schwarz inequality in the Lorentzian context, it follows that
\begin{equation}\label{eq:2.1}
	\langle N_{L},\xi\rangle_{L}\leq-1<0,
\end{equation}
with equality if and only if $N_{L}=\xi$. In particular, there exists a unique number $\varphi\geq0$, called the {\em hyperbolic angle} between $N_{L}$ and $\xi$, such that
\begin{equation}\label{eq:2.2}
	\langle N_{L},\xi\rangle_{L}=-\cosh\varphi.
\end{equation}
Nevertheless, in the case when $\Sigma^2$ is timelike we cannot assure the global existence of a normal vector field $N_L$ satisfying the inequality~\eqref{eq:2.1}. However, such vector field can be locally defined in almost all $\Sigma^2$. Specifically, given a non-degenerate surface $\Sigma^2$ in $\mathbb{L}^{3}(\kappa,\tau)$, we can define the open region of $\Sigma^2$ given by
\begin{equation}\label{eq:IWOK}
	\widehat{\Sigma}^2=\{p\in\Sigma^2\,:\,\exists\, \mathcal{U}_p,\, N_L \textrm{ s.t. } \langle N_L,\xi\rangle_L \leq 0 \textrm{ on } \mathcal{U}_p\},
\end{equation}
where $\mathcal{U}_p\subseteq \Sigma^2$ is a neighbourhood of $p$ and $N_L$ is a unit normal vector field (locally) defined on $\widehat{\Sigma}^2$. In the case of a spacelike surface it trivially holds that $\widehat{\Sigma}^2=\Sigma^2$, whereas in the case of a timelike surface $\widehat{\Sigma}^2$ is an open and dense subset of $\Sigma^2$, and we can globally defined $N_L$ saystifying~\eqref{eq:2.1} in any connected component of $\widehat{\Sigma}^2$. In order to unify our notation, we will denote by $\varepsilon$ the sign of $\langle N_{L},N_{L}\rangle_{L}$, that is,
\begin{equation}\label{eq:2.4}
	\varepsilon=\left\{
	\begin{array}{rl}
		-1 & \mbox{if}\,\, \Sigma^{2}\,\, \mbox{is spacelike,}  \\
		1 & \mbox{if}\,\, \Sigma^{2}\,\, \mbox{is timelike.}
	\end{array}
	\right.
\end{equation}

Given $\psi:\Sigma^{2}\longrightarrow \mathbb{L}^{3}(\kappa,\tau)$ a non-degenerate surface, $\psi:\Sigma^{2}\longrightarrow\mathbb{E}^{3}(\kappa,\tau)$ also defines an immersed surface into the Riemannian homogeneous space $\mathbb{E}^{3}(\kappa,\tau)$. Let us denote the induced metric from $\mathbb{E}^3(\kappa,\tau)$ by $\langle,\rangle_R$. Let us see how it is possible to relate the extrinsic geometries of both surfaces. Firstly, we can get the following relation between both normal vector fields.

\begin{proposition}\label{prop:2.1}
	Let $\psi:\Sigma^{2}\longrightarrow \mathbb{L}^{3}(\kappa,\tau)$ be a non-degenerate surface immersed into the Lorentzian homogeneous space $\mathbb{L}^{3}(\kappa,\tau)$, and consider its restriction to any connected component of $\widehat{\Sigma}^2$, $\widehat{\Sigma}^2_0$. Let $N_L$ be the globally defined unit normal vector field on $\widehat{\Sigma}^2_0$ such that $\langle N_{L},\xi\rangle_{L}\leq0$. Then,
	\begin{equation}\label{normal R}
		N_{R}=\dfrac{1}{\omega_{L}}\left(\sqrt{2(\omega_{L}^{2}-\varepsilon)}\,\xi-N_{L}\right)
	\end{equation}
	globally defines the unique unit normal vector field on  $(\widehat{\Sigma}^2_0,\langle,\rangle_R)$ such that $\langle N_{R},\xi\rangle_{R}\geq0$, where
	\begin{equation}\label{eq:2.5}
		\omega_{L}=\sqrt{\varepsilon+2\langle N_{L},\xi\rangle_{L}^{2}}\geq1>0.
	\end{equation}
\end{proposition}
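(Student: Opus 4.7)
The plan is to verify directly that $N_R$ as defined in~\eqref{normal R} satisfies the three characterizing properties of the unique Riemannian unit normal on $\widehat{\Sigma}^2_0$ with $\langle N_R,\xi\rangle_R\geq 0$: orthogonality to the tangent plane with respect to $\langle\,,\rangle_R$, unit length, and the sign condition. Uniqueness will then be automatic from the one-dimensionality of the normal line of a surface in a three-manifold together with the sign constraint. Well-definedness of the square root in~\eqref{normal R} is immediate: $\varepsilon=-1$ with $\langle N_L,\xi\rangle_L\leq -1$ in the spacelike case by~\eqref{eq:2.1}, and $\varepsilon=1$ in the timelike case, so $\omega_L\geq 1>0$ on $\widehat{\Sigma}^2_0$.

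The key device will be Lemma~\ref{lem:1.1}, which since $\xi=E_3$ reads
\begin{equation*}
\langle X,Y\rangle_R = \langle X,Y\rangle_L + 2\langle X,\xi\rangle_L\langle Y,\xi\rangle_L.
\end{equation*}
Applied to the pairs $(\xi,\xi)$, $(N_L,\xi)$ and $(N_L,N_L)$ this yields three auxiliary identities, $\langle\xi,\xi\rangle_R=1$, $\langle N_L,\xi\rangle_R=-\langle N_L,\xi\rangle_L$ and $\langle N_L,N_L\rangle_R=\varepsilon+2\langle N_L,\xi\rangle_L^2=\omega_L^2$, which I will feed into every subsequent computation.

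The orthogonality check is the one step with genuine content. For $X$ tangent to $\widehat{\Sigma}^2_0$, plugging $\langle N_L,X\rangle_L=0$ and~\eqref{normal R} into the displayed identity produces
\begin{equation*}
\omega_L\langle N_R,X\rangle_R = \bigl(\sqrt{2(\omega_L^2-\varepsilon)}+2\langle N_L,\xi\rangle_L\bigr)\langle X,\xi\rangle_L,
\end{equation*}
which vanishes because $\omega_L^2-\varepsilon = 2\langle N_L,\xi\rangle_L^2$ and, crucially, the condition $\langle N_L,\xi\rangle_L\leq 0$ on $\widehat{\Sigma}^2_0$ turns $-2\langle N_L,\xi\rangle_L$ into the non-negative square root of $2(\omega_L^2-\varepsilon)$. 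This sign matching is the one conceptual point in the argument and is exactly why the region $\widehat{\Sigma}^2$ in~\eqref{eq:IWOK} is defined by a non-strict inequality; I expect it to be the main obstacle. The remaining two properties are then routine: expanding $\omega_L^2\langle N_R,N_R\rangle_R$ via the auxiliary identities and using $\sqrt{2(\omega_L^2-\varepsilon)}=-2\langle N_L,\xi\rangle_L$ to collapse the cross term leaves $\omega_L^2$, so $\langle N_R,N_R\rangle_R=1$; and an analogous one-line computation yields $\omega_L\langle N_R,\xi\rangle_R = -\langle N_L,\xi\rangle_L\geq 0$, closing the argument.
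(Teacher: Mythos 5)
Your proposal is correct and follows essentially the same route as the paper: both verify unit length, tangency-orthogonality and the sign condition by expanding $\langle\,,\rangle_R$ in terms of $\langle\,,\rangle_L$ via Lemma~\ref{lem:1.1} and using the key identity $2\langle N_L,\xi\rangle_L=-\sqrt{2(\omega_L^2-\varepsilon)}$, which encodes the sign hypothesis on $\widehat{\Sigma}^2_0$. (The prefactor in your orthogonality display has the opposite overall sign from the correct one, but since it vanishes identically this is immaterial.)
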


\begin{proof}
	First of all, from~\eqref{normal R} we get
	\begin{equation}\label{eq:2.6}
		\langle N_{R},N_{R}\rangle_{R}=\dfrac{1}{\omega_{L}^{2}}\left(2(\omega_{L}^{2}-\varepsilon)-2\sqrt{2(\omega_{L}^{2}-\varepsilon)}\langle N_{L},\xi\rangle_{R}+\langle N_{L},N_{L}\rangle_{R}\right).
	\end{equation}
	Since $\langle N_{L},\xi\rangle_{L}\leq0$, it follows from Lemma~\ref{lem:1.1} and~\eqref{eq:2.5} that
	\begin{equation}\label{eq:2.7}
		\langle N_L,\xi\rangle_{R}=-\langle N_{L},\xi\rangle_{L}=\dfrac{\sqrt{2(\omega_{L}^{2}-\varepsilon)}}{2},
	\end{equation}
	and
	\begin{equation}\label{eq:2.13}
		\langle N_{L},N_{L}\rangle_{R}=\varepsilon+2\langle N_{L},\xi\rangle_{L}^{2}=\omega_{L}^{2}.
	\end{equation}
	Thus, we can immediately check from~\eqref{eq:2.6} that $\langle N_{R},N_{R}\rangle_{R}=1$.
	
	Furthermore, from~\eqref{normal R} we also obtain
	\begin{equation}\label{eq:2.15.bis}
		\langle N_{R},\xi\rangle_{R}=\dfrac{1}{\omega_{L}}\left(\sqrt{2(\omega_{L}^{2}-\varepsilon)}-\dfrac{\sqrt{2(\omega_{L}^{2}-\varepsilon)}}{2}\right)=\dfrac{\sqrt{2(\omega_{L}^{2}-\varepsilon)}}{2\omega_{L}}\geq0.
	\end{equation}
	Finally, using again~\eqref{normal R},~\eqref{eq:2.7} and Lemma~\ref{lem:1.1}, for every $X\in\mathfrak{X}(\Sigma)$ we obtain
	\begin{align}\label{eq:2.16}
		\langle N_{R},X\rangle_{R}=&\dfrac{1}{\omega_{L}}\left(\sqrt{2(\omega^{2}_{L}-\varepsilon)}\langle \xi,X\rangle_{R}-\langle N_{L},X\rangle_{R}\right)\\
		=&\dfrac{1}{\omega_{L}}\left(-\sqrt{2(\omega^{2}_{L}-\varepsilon)}-2\langle N_{L},\xi\rangle_{L}\right)\langle\xi, X\rangle_L=0,
	\end{align}
	which finishes the proof.
\end{proof}

Observe that it is also possible to express the vector field $N_L$ in terms of $N_R$. In fact, from~\eqref{normal R} we immediately get
\begin{equation}\label{normal L}
	N_L=\frac{1}{\omega_R}\left(\sqrt{2(1-\varepsilon \omega^2_R)}\,\xi - N_R\right),
\end{equation}
where, by definition, $\omega_R:=\dfrac{1}{\omega_L}$. Moreover, from~\eqref{eq:2.15.bis} it holds
\begin{equation}\label{omega_R}
	\omega_R=\sqrt{\varepsilon (1-2\langle N_R,\xi\rangle^2_R)}.
\end{equation}

In what follows, we will always assume that given a non-degenerate surface $\Sigma^{2}$ in $\mathbb{L}^{3}(\kappa,\tau)$, any connected component $\widehat{\Sigma}^2_0$ of $\widehat{\Sigma}$ is oriented by the unique unit normal vector field $N_{L}$ such that $\langle N_{L},\xi\rangle_{L}\leq0$. Doing so, as a surface of the space $\mathbb{E}^{3}(\kappa,\tau)$, $\widehat\Sigma^{2}_0$ will be always oriented by the unit normal vector field $N_{R}$ given by~\eqref{normal R}, with $\langle N_{R},\xi\rangle_{R}\geq0$. We observe that, given any $V\in\mathfrak{X}(D)$,~\eqref{normal R} yields
\begin{equation}\label{eq:2.24_aux}
	\begin{split}
		\langle N_{R},V\rangle_{R}=&\dfrac{1}{\omega_{L}}\sqrt{2(\omega_{L}^{2}-\varepsilon)}\langle \xi,V\rangle_{R}-\dfrac{1}{\omega_{L}}\langle N_{L},V\rangle_{R}\\
		=&-\dfrac{1}{\omega_{L}}\sqrt{2(\omega_{L}^{2}-\varepsilon)}\langle  \xi,V\rangle_{L}-\dfrac{1}{\omega_{L}}\langle N_{L},V\rangle_{R}.
	\end{split}
\end{equation}
Furthermore, from~\eqref{eq:1.19} and~\eqref{eq:2.7} it holds
\begin{equation}\label{eq:2.24_aux2}
	\langle N_{L},V\rangle_{R}=\langle N_{L},V\rangle_{L}+2\langle N_{L},\xi\rangle_{L}\langle \xi,V\rangle_{L}=\langle N_{L},V\rangle_{L}-\sqrt{2(\omega_{L}^{2}-\varepsilon)}\langle \xi,V\rangle_{L}.
\end{equation}
Hence,
\begin{equation}\label{eq:2.24}
	\langle N_{R},V\rangle_{R}=-\dfrac{1}{\omega_{L}}\langle N_{L},V\rangle_{L} \quad \textrm{on} \quad \widehat{\Sigma}^2_0.
\end{equation}

An orientable surface $\Sigma^{2}$ in $\mathbb{E}^{3}(\kappa,\tau)$ and/or $\mathbb{L}^{3}(\kappa,\tau)$ is said to be an {\em helix surface}, or to be a {\em constant angle surface}, if its normal vector field makes a constant angle with respect to the vertical vector field $\xi$. As a particular case of helix surface, $\Sigma^2$ is called a \emph{Hopf surface} in $\mathbb{E}^3(\kappa,\tau)$ when $\langle N_R,\xi\rangle_R=0$. It is well-known that Hopf surfaces in $\mathbb{E}^3(\kappa,\tau)$ are the preimage by the submersion $\pi$ of a regular curve $\alpha$ in $M^2(\kappa)$, $\pi^{-1}(\alpha)$, and they are always timelike surfaces in $\mathbb{L}^3(\kappa,\tau)$. On the other hand, in the case $\tau=0$, slices in $M^2(\kappa)\times \mathbb{R}$ are characterized by $N_R=N_L=\xi$, or equivalently by $\langle N_R,\xi\rangle_R=1$, and they are trivially spacelike surfaces in $M^2(\kappa)\times\mathbb{R}_1$. However, there is no surface in $\mathbb{E}^3(\kappa,\tau)$, $\tau\neq 0$, such that $N_R=\xi$, since in this case the horizontal distribution spanned by the vector fields $E_1$ and $E_2$ is not integrable.

It is worth pointing out that a full classification of helix surfaces in all the Riemannian product spaces $\mathbb{E}(\kappa,\tau)$ is known, see~\cite{Dillen:07,Dillen:09,diScala:09,Fastenakels:11,Onnis:14,Onnis:16,Munteanu:09}.

Observe that taking $V=\xi$ in~\eqref{eq:2.24}, we get the following nice consequence.

\begin{corollary}\label{cor:constantangle}
	A non-degenerate surface is an helix surface in $\mathbb{L}^{3}(\kappa,\tau)$ if and only if it is also an helix surface as a surface in $\mathbb{E}^{3}(\kappa,\tau)$.
\end{corollary}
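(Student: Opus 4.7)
The plan is to exploit equation \eqref{eq:2.24} with the choice $V=\xi$, which is the substitution the authors themselves point out just before the corollary, and then show that the scalar relation it produces is a bijection between the two angle functions.

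First I would set $V=\xi$ in \eqref{eq:2.24} to obtain
\begin{equation*}
\langle N_R,\xi\rangle_R = -\frac{1}{\omega_L}\,\langle N_L,\xi\rangle_L
\end{equation*}
on any connected component $\widehat{\Sigma}^2_0$. By \eqref{eq:2.5} the function $\omega_L$ is determined by $\langle N_L,\xi\rangle_L$ alone, namely $\omega_L=\sqrt{\varepsilon+2\langle N_L,\xi\rangle_L^2}$; and by \eqref{eq:2.19} the function $\omega_R=1/\omega_L$ is determined by $\langle N_R,\xi\rangle_R$ alone, namely $\omega_R=\sqrt{\varepsilon(1-2\langle N_R,\xi\rangle_R^2)}$.

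Next I would run the two implications. If $\langle N_L,\xi\rangle_L$ is constant along $\Sigma^2$, then $\omega_L$ is constant, and hence from the displayed identity $\langle N_R,\xi\rangle_R$ is constant as well. Conversely, if $\langle N_R,\xi\rangle_R$ is constant, then $\omega_R$ is constant by \eqref{eq:2.19}, so $\omega_L=1/\omega_R$ is constant, and then $\langle N_L,\xi\rangle_L=-\omega_L\langle N_R,\xi\rangle_R$ is constant too. This proves the equivalence on each connected component of $\widehat{\Sigma}^2$, and since $\widehat{\Sigma}^2$ is open and dense (and equals $\Sigma^2$ when $\Sigma^2$ is spacelike) continuity propagates the conclusion to all of $\Sigma^2$.

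Honestly, there is no serious obstacle here: the whole statement is a direct algebraic consequence of \eqref{eq:2.24}, \eqref{eq:2.5} and \eqref{eq:2.19}. The only point to be careful about is the timelike case, where $N_L$ is only defined locally on the dense open set $\widehat{\Sigma}^2$ and the orientation may flip between components; however, since "constant angle" means constancy of $\langle N,\xi\rangle$ (up to sign depending on the chosen orientation) and the relation between $\langle N_R,\xi\rangle_R$ and $\langle N_L,\xi\rangle_L$ is component-wise, the argument goes through without modification.
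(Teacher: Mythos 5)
Your proof is correct and follows essentially the same route as the paper: the authors also take $V=\xi$ in~\eqref{eq:2.24}, invoke~\eqref{eq:2.5} to see that $\omega_L$ is a function of $\langle N_L,\xi\rangle_L$ alone, and then extend from the dense open set $\widehat{\Sigma}^2$ to all of $\Sigma^2$ by continuity. Your write-up merely makes explicit the converse direction via~\eqref{eq:2.19}, which the paper leaves as ``immediate.''
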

\begin{proof}
	The corollary holds immediately in any connected component of $\widehat{\Sigma}^2$ from~\eqref{eq:2.24} and~\eqref{eq:2.5}. Since $\widehat{\Sigma}^2$ is dense on the (connected) surface $\Sigma^2$, the result follows by a continuity argument.
\end{proof}

Our next aim is to obtain a relation between the shape operators related to $(\Sigma^2,\langle,\rangle_R)$ and  $(\Sigma^2,\langle,\rangle_L)$. Before that, it is necessary to recall the integrability equations of such surfaces derived from the Gauss and Weingarten formulas.

From now on, and unless otherwise stated, we will assume that we are working on a connected component $\widehat{\Sigma}^2_0$ of $\widehat{\Sigma}^2$. Furthermore, according to Proposition~\ref{prop:2.1}, we will consider the normal vector fields $N_R$ and $N_L$ on $(\widehat{\Sigma}^2_0,\langle,\rangle_R)$ and $(\widehat{\Sigma}^2_0,\langle,\rangle_L)$ respectively, such that $\langle N_R,\xi\rangle_R\geq 0$ and $\langle N_L,\xi\rangle_L\leq 0$. Let us denote by $T_{R}$ and $T_{L}$ the tangential components of $\xi$ along $\widehat\Sigma^{2}_0$ with respect to the metrics $\langle\,,\rangle_{R}$ and $\langle\,,\rangle_{L}$, respectively. In this setting, we can consider the following splittings
\begin{equation}\label{eq:2.25}
	\xi=T_{R}+\langle N_{R},\xi\rangle_{R}N_{R}\quad\mbox{and}\quad\xi=T_{L}+\varepsilon\langle N_{L},\xi\rangle_{L}N_{L}.
\end{equation}
Thus, taking norms in~\eqref{eq:2.25} the followings identities hold,
\begin{equation}\label{eq:2.26}
	1=\langle\xi,\xi\rangle_{R}=|T_{R}|_{R}^{2}+\langle N_{R},\xi\rangle_{R}^{2}\quad\mbox{and}\quad-1=\langle\xi,\xi\rangle_{L}=|T_{L}|^2_{L}+\varepsilon\langle N_{L},\xi\rangle_{L}^{2},
\end{equation}
where $|\cdot|_R=\sqrt{\langle\cdot,\cdot\rangle_R}$ and $|\cdot|_L=\sqrt{|\langle\cdot,\cdot\rangle_{L}|}$ denote the norm on $(\Sigma^2,\langle,\rangle_R)$ and $(\Sigma^2,\langle,\rangle_L)$, respectively.

Let us denote by $\nabla^R$ the Levi-Civita connection related to $(\Sigma^2,\langle,\rangle_R)$. Then, the Gauss and Weingarten formulas of the surface $\psi:\Sigma^{2}\longrightarrow\mathbb{E}^{3}(\kappa,\tau)$ are given, respectively, by
\begin{equation}\label{eq:3.1}
	\overline{\nabla}^{R}_{X}Y=\nabla_{X}^{R}Y+\langle A_{R}(X),Y\rangle_{R}N_{R}
\end{equation}
and
\begin{equation}\label{eq:3.2}
	A_{R}(X)=-\overline{\nabla}^{R}_{X}N_{R},
\end{equation}
for every tangent vector fields $X,Y\in\mathfrak{X}(\Sigma)$, where $A_{R}:\mathfrak{X}(\Sigma)\rightarrow\mathfrak{X}(\Sigma)$ stands for the shape operator of $(\Sigma^{2},\langle,\rangle_R)$ with respect to $N_{R}$. From the above Gauss and Weingarten formulas, jointly with~\eqref{eq:1.7} and~\eqref{eq:2.25}, we obtain
\begin{align}\label{eq:3.8}
	\tau(X\wedge_{R}\xi)=&\overline{\nabla}^{R}_{X}\xi=\overline{\nabla}^{R}_{X}\left(T_{R}+\langle N_{R},\xi\rangle_{R}N_{R}\right)\\
	=&\nabla^{R}_{X} T_{R}+\langle A_{R}(X), T_{R}\rangle_{R}N_{R}+ X\left(\langle N_{R},\xi\rangle_{R}\right)N_{R}-\langle N_{R},\xi\rangle_{R}A_{R}(X).
\end{align}
On the other hand,
\begin{align}\label{eq:3.9}
	\tau(X\wedge_{R}\xi)=&\tau\left(X\wedge_{R}\left(T_{R}+\langle N_{R},\xi\rangle_{R}N_{R}\right)\right)\\
	=&-\tau\langle N_{R},\xi\rangle_{R}J_{R}(X)-\tau\langle J_{R}(T_{R}),X\rangle_{R}N_{R},
\end{align}
where $J_{R}(X)=N_{R}\wedge_{R}X$ for all $X\in\mathfrak{X}(\Sigma)$, so that $X\wedge_{R} T_{R}=-\langle X,J_{R}(T_{R})\rangle_{R}N_{R}$. Therefore, comparing the tangent and normal components in~\eqref{eq:3.8} and~\eqref{eq:3.9}, we can derive the following integrability equations,
\begin{equation}\label{eq:3.10}
	\nabla^{R}_{X}T_{R}=\langle N_{R},\xi\rangle_{R}\left(A_{R}(X)-\tau J_{R}(X)\right)
\end{equation}
and
\begin{equation}\label{eq:3.11}
	X(\langle N_{R},\xi\rangle_{R})=-\langle(A_{R}+\tau J_{R})(T_{R}),X\rangle_{R},
\end{equation}
for all $X\in\mathfrak{X}(\Sigma)$.

Analogously, the Gauss and Weingarten formulas of the non-degenerate surface $\psi:\Sigma^{2}\longrightarrow\mathbb{L}^{3}(\kappa,\tau)$ are given, respectively, by
\begin{equation}\label{eq:3.3}
	\overline{\nabla}^{L}_{X}Y=\nabla_{X}^{L}Y+\varepsilon\langle A_{L}(X),Y\rangle_{L}N_{L}
\end{equation}
and
\begin{equation}\label{eq:WeingR}
	A_{L}(X)=-\overline{\nabla}^{L}_{X}N_{L},
\end{equation}
for every tangent vector fields $X,Y\in\mathfrak{X}(\Sigma)$, where $\nabla^L$ and $A_{L}:\mathfrak{X}(\Sigma)\rightarrow\mathfrak{X}(\Sigma)$ stand for the Levi-Civita connection and the shape operator of $(\Sigma^{2},\langle,\rangle_L)$ with respect to $N_{L}$, respectively. Moreover, the corresponding integrability equations are given by
\begin{equation}\label{eq:int1_L}
	\nabla^{L}_{X} T_{L}=\varepsilon\langle N_{L},\xi\rangle_{L}\left(A_{L}(X)+\tau J_{L}(X)\right)
\end{equation}
and
\begin{equation}\label{eq:int2_L}
	X\left(\langle N_{L},\xi\rangle_{L}\right)=-\langle(A_{L}-\tau J_{L})(T_{L}),X\rangle_L,
\end{equation}
for all $X\in\mathfrak{X}(\Sigma)$, where $J_L(X)=N_L\wedge_L X$.

In our next result we establish a relation between both shape operators $A_R$ and $A_L$.
\begin{proposition}\label{prop:3.1}
	Let $\Sigma^{2}$ be a non-degenerate surface in $\mathbb{L}^{3}(\kappa,\tau)$ and let $\widehat{\Sigma}^2_0$ be a connected component of $\widehat{\Sigma}^2$. Then, the corresponding shape operators of $(\Sigma^2,\langle,\rangle_R)$ and $(\Sigma^2,\langle,\rangle_L)$ with respect to $N_R$ and $N_L$ in $\widehat{\Sigma}^2_0$ are related by
	\begin{equation}\label{shape R}
		A_{R}(X)=-\dfrac{1}{\omega_{L}}A_{L}(X)-\dfrac{2\varepsilon}{\omega_{L}^{3}}\langle(A_{L}-\tau J_{L})(T_{L}),X\rangle_{L}T_{L}-\dfrac{2\tau}{\omega_{L}}\langle T_{L},X\rangle_{L}J_{L}(T_{L}),
	\end{equation}
	for every $X\in\mathfrak{X}(\widehat{\Sigma}_0)$. Equivalently,
	\begin{equation}\label{shape L}
		A_{L}(X)=-\dfrac{1}{\omega_{R}}A_{R}(X)+\dfrac{2\varepsilon}{\omega_{R}^{3}}\langle (A_{R}+\tau J_{R})(T_{R}),X\rangle_{R}T_{R}-\dfrac{2\tau}{\omega_{R}}\langle T_{R},X\rangle_{R}\,J_{R}(T_{R}).
	\end{equation}
\end{proposition}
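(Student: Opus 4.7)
My plan is to start from the Weingarten formula $A_R(X)=-\overline{\nabla}^{R}_{X}N_R$ and transfer the computation to the Lorentzian connection by means of Lemma~\ref{lem:1.2}, namely
\begin{equation*}
A_R(X) \,=\, -\overline{\nabla}^{L}_{X}N_R \,-\, W(N_R,X).
\end{equation*}
I would then substitute $N_R=\tfrac{1}{\omega_L}\bigl(\sqrt{2(\omega_L^{2}-\varepsilon)}\,\xi-N_L\bigr)$ and expand $\overline{\nabla}^{L}_{X}N_R$ by the product rule, using $\overline{\nabla}^{L}_{X}\xi=-\tau(X\wedge_L\xi)$ from~\eqref{eq:1.11} together with the Weingarten equation~\eqref{eq:3.2} for $N_L$. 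The only scalar derivatives appearing, $X(1/\omega_L)$ and $X\!\left(\sqrt{2(\omega_L^{2}-\varepsilon)}/\omega_L\right)$, can be computed by differentiating $\omega_L^{2}=\varepsilon+2\langle N_L,\xi\rangle_L^{2}$ and invoking~\eqref{eq:int2_L}; after using the identity $\omega_L^{2}-2\langle N_L,\xi\rangle_L^{2}=\varepsilon$, they simplify to expressions proportional to $\langle(A_L-\tau J_L)(T_L),X\rangle_L$, which is precisely the combination appearing in the statement.

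Next I would evaluate the cross product $X\wedge_L\xi$ by writing $\xi=T_L+\varepsilon\langle N_L,\xi\rangle_L N_L$ (from~\eqref{eq:2.25}) and applying the identities $X\wedge_L T_L=-\varepsilon\langle J_L(T_L),X\rangle_L N_L$ and $X\wedge_L N_L=-J_L(X)$, obtaining
\begin{equation*}
X\wedge_L\xi \,=\, -\varepsilon\langle J_L(T_L),X\rangle_L\,N_L \,-\, \varepsilon\langle N_L,\xi\rangle_L\,J_L(X).
\end{equation*}
For the correction $W(N_R,X)$, I would read off the horizontal-vertical decomposition $N_R^{v}=\langle N_R,\xi\rangle_R\,\xi$, $N_R^{h}=-\omega_R\,N_L^{h}$ and $X^{v}=-\langle T_L,X\rangle_L\,\xi$, and observe that $N_L^{h}\wedge_L\xi=N_L\wedge_L\xi=J_L(T_L)$; plugging these into~\eqref{eq:1.22} produces an expression along $J_L(T_L)$, $J_L(X)$ and $N_L$ only.

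Assembling everything, two pleasant cancellations take place: the $J_L(X)$ contributions arising from the $X\wedge_L\xi$ piece of $\overline{\nabla}^{L}_{X}N_R$ and those from $-W(N_R,X)$ cancel exactly, and so do the two $\langle J_L(T_L),X\rangle_L N_L$ contributions. The surviving $\xi$ and $N_L$ components then combine through the identity $-\varepsilon\xi+\langle N_L,\xi\rangle_L N_L=-\varepsilon T_L$ into a single tangential term along $T_L$, yielding precisely the coefficient $-\tfrac{2\varepsilon}{\omega_L^{3}}\langle(A_L-\tau J_L)(T_L),X\rangle_L$. Together with $-\tfrac{1}{\omega_L}A_L(X)$ and the $J_L(T_L)$ contribution $-\tfrac{2\tau}{\omega_L}\langle T_L,X\rangle_L J_L(T_L)$ coming from $W(N_R,X)$, this recovers~\eqref{shape R}. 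The main obstacle is purely combinatorial: one has to track several terms along different frame directions with varying powers of $\omega_L$ and $\langle N_L,\xi\rangle_L$; the structural reason the simplification works is that $A_R(X)$ is automatically tangent to $\Sigma$, so the normal components are forced to cancel. For~\eqref{shape L} I would either repeat the same argument with the roles of $R$ and $L$ interchanged, starting from~\eqref{eq:2.17}, or algebraically invert~\eqref{shape R} using $\omega_R=1/\omega_L$ and the relation between $T_R$ and $T_L$ provided by~\eqref{eq:2.25}.
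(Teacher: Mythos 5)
Your proposal follows essentially the same route as the paper's proof: transfer $-\overline{\nabla}^R_X N_R$ to the Lorentzian connection via Lemma~\ref{lem:1.2}, substitute~\eqref{normal R}, differentiate $\omega_L$ through~\eqref{eq:int2_L}, and use~\eqref{eq:1.11} together with the splitting~\eqref{eq:2.25}; the only difference is organizational, since the paper first combines the $\overline{\nabla}^L_X\xi$ term with $-W(N_R,X)$ into the single term $\tfrac{2}{\omega_L}\langle X,\xi\rangle_L\overline{\nabla}^L_{N_L}\xi$ rather than expanding both into frame components and cancelling, and your claimed cancellations and the identity $-\varepsilon\xi+\langle N_L,\xi\rangle_L N_L=-\varepsilon T_L$ all check out. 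The argument is correct.
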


\begin{proof}
	Let us prove relation~\eqref{shape R} since the second one is analogous. Lemma~\ref{lem:1.2}, jointly with expression~\eqref{normal R}, yields
	\begin{align}\label{eq:3.12}
		A_{R}(X)=&-\overline{\nabla}^{R}_{X}N_{R}=-\overline{\nabla}^{L}_{X}N_{R}-W(N_{R},X)\\
		=&-\overline{\nabla}^{L}_{X}\left(\dfrac{1}{\omega_{L}}\left(\sqrt{2(\omega^{2}_{L}-\varepsilon)}\, \xi-N_{L}\right)\right)-W(N_{R},X)\\
		 =&\dfrac{X(\omega_{L})}{\omega^{2}_{L}}\left(\sqrt{2(\omega^{2}_{L}-\varepsilon)}\,\xi-N_{L}\right)+\dfrac{1}{\omega_{L}}\overline{\nabla}^{L}_{X}N_{L}-\dfrac{1}{\omega_{L}}X\left(\sqrt{2(\omega^{2}_{L}-\varepsilon)}\right)\xi\\
		&-\,\dfrac{1}{\omega_{L}}\sqrt{2(\omega^{2}_{L}-\varepsilon)}\overline{\nabla}^{L}_{X}\xi-W(N_{R},X),
	\end{align}
	for every $X\in\mathfrak{X}(\widehat{\Sigma}_0)$. On the one hand, from~\eqref{eq:2.7} we have
	\begin{equation}\label{eq:3.13}
		X\left(\langle N_{L},\xi\rangle_{L}^{2}\right)=2\langle N_{L},\xi\rangle_{L}X\left(\langle N_{L},\xi\rangle_{L}\right)=-X\left(\langle N_{L},\xi\rangle_{L}\right)\sqrt{2(\omega^{2}_{L}-\varepsilon)},
	\end{equation}
	so that
	\begin{equation}\label{eq:3.14}
		X(\omega_{L})=X\left(\sqrt{\varepsilon+2\langle N_{L},\xi\rangle_{L}^{2}}\right)=-\dfrac{\sqrt{2(\omega^{2}_{L}-\varepsilon)}}{\omega_{L}}X\left(\langle N_{L},\xi\rangle_{L}\right).
	\end{equation}
	Consequently,
	\begin{equation}\label{eq:3.15}
		\dfrac{X(\omega_{L})}{\omega^{2}_{L}}=-\dfrac{\sqrt{2(\omega^{2}_{L}-\varepsilon)}}{\omega_{L}^{3}}X\left(\langle N_{L},\xi\rangle_{L}\right)\quad\mbox{and}\quad X\left(\sqrt{2(\omega^{2}_{L}-\varepsilon)}\right)=-2X\left(\langle N_{L},\xi\rangle_{L}\right).
	\end{equation}
	Hence, inserting~\eqref{eq:3.15} in~\eqref{eq:3.12} we obtain
	\begin{align}\label{eq:3.17}
		A_{R}(X)&=-\dfrac{1}{\omega_{L}}A_{L}(X)+\dfrac{1}{\omega_{L}^{3}}X\left(\langle N_{L},\xi\rangle_{L}\right)\left(2\varepsilon\,\xi+\sqrt{2(\omega_{L}^{2}-\varepsilon)}\,N_{L}\right)\\
		&-\dfrac{1}{\omega_{L}}\sqrt{2(\omega^{2}_{L}-\varepsilon)}\overline{\nabla}^{L}_{X}\xi-W(N_{R},X).
	\end{align}
	On the other hand, using one more time~\eqref{eq:2.7} and~\eqref{eq:2.25}, it holds
	\begin{equation}\label{eq:3.18}
		\xi=T_{L}-\dfrac{\varepsilon}{2}\sqrt{2(\omega_{L}^{2}-\varepsilon)}N_{L}.
	\end{equation}
	Therefore, inserting~\eqref{eq:3.18} in~\eqref{eq:3.17} we get
	\begin{equation}\label{eq:3.19}
		A_{R}(X)=-\dfrac{1}{\omega_{L}}A_{L}(X)+\dfrac{2\varepsilon}{\omega_{L}^{3}}X\left(\langle N_{L},\xi\rangle_{L}\right)T_{L}-\dfrac{1}{\omega_{L}}\sqrt{2(\omega^{2}_{L}-\varepsilon)}\overline{\nabla}^{L}_{X}\xi-W(N_{R},X).
	\end{equation}
	A direct computation from~\eqref{eq:1.22},~\eqref{normal R} and~\eqref{eq:2.7} implies
	\begin{equation}\label{eq:3.20}
		\begin{split}
			W(N_{R},X)&=-\dfrac{2}{\omega_{L}}\left(\langle X, \xi\rangle_{L}\overline{\nabla}^{L}_{N_{L}} \xi+\sqrt{2(\omega_{L}^{2}-\varepsilon)}\overline{\nabla}^{L}_{X}\xi+\langle N_{L},\xi\rangle_{L}\overline{\nabla}^{L}_{X}\xi\right)\\
			&=-\dfrac{2}{\omega_{L}}\left(\langle X, \xi\rangle_{L}\overline{\nabla}^{L}_{N_{L}} \xi+\dfrac{\sqrt{2(\omega_{L}^{2}-\varepsilon)}}{2}\overline{\nabla}^{L}_{X}\xi\right),
		\end{split}
	\end{equation}
	for all $X\in\mathfrak{X}(\Sigma)$. Furthermore, from~\eqref{eq:1.11} and~\eqref{eq:2.25} we have
	\begin{equation}\label{eq:3.22}
		\overline{\nabla}^{L}_{N_{L}} \xi=-\tau(N_{L}\wedge_{L} \xi)= -\tau J_{L}(T_{L}).
	\end{equation}
	The desired expression follows now easily from~\eqref{eq:int2_L},~\eqref{eq:3.19},~\eqref{eq:3.20} and~\eqref{eq:3.22}.
\end{proof}

\section{Gaussian and extrinsic curvature of surfaces in homogeneous spaces}\label{sec:curv}

Following~\cite{Daniel:07}, the curvature tensor\footnote{We adopt for the $(1,3)$-curvature tensor of a semi-Riemannian manifold the following definition (\cite[Chapter 3]{O'Neill:83}): $\overline{R}(X,Y)Z=\overline{\nabla}_{[X,Y]}Z-[\overline\nabla_X,\overline\nabla_Y]Z$.} of $\mathbb{E}^3(\kappa,\tau)$ is given by
\begin{align}\label{eq:5.2}
	\overline{R}_{R}(X,Y)Z=&(\kappa-3\tau^{2})(\langle X,Z\rangle_{R}Y-\langle Y,Z\rangle_{R}X)\\
	&+\,(\kappa-4\tau^{2})\langle Z,\xi\rangle_{R}(\langle Y,\xi\rangle_{R}X-\langle X,\xi\rangle_{R}Y)\\
	&+\,(\kappa-4\tau^{2})(\langle Y,Z\rangle_{R}\langle X,\xi\rangle_{R}-\langle X,Z\rangle_{R}\langle Y,\xi\rangle_{R})\xi,
\end{align}
for any $X,Y,Z\in\mathfrak{X}(\mathcal{D})$. Analogously, it is easy to check that the curvature tensor of $\mathbb{L}^{3}(\kappa,\tau)$ is given by
\begin{align}\label{eq:5.2.1}
	\overline{R}_{L}(X,Y)Z=&(\kappa+3\tau^{2})(\langle X,Z\rangle_{L}Y-\langle Y,Z\rangle_{L}X)\\
	&-\,(\kappa+4\tau^{2})\langle Z,\xi\rangle_{L}(\langle Y,\xi\rangle_{L}X-\langle X,\xi\rangle_{L}Y)\\
	&-\,(\kappa+4\tau^{2})(\langle Y,Z\rangle_{L}\langle X,\xi\rangle_{L}-\langle X,Z\rangle_{L}\langle Y,\xi\rangle_{L})\xi,
\end{align}
where $X,Y,Z\in\mathfrak{X}(\mathcal{D})$.

Consider again $\Sigma^2$ a non-degenerate surface in $\mathbb{L}^3(\kappa,\tau)$, and let us denote by $\overline{K}_{R}$ and $\overline{K}_{L}$ the sectional curvatures in $\mathbb{E}^{3}(\kappa,\tau)$ and $\mathbb{L}^3(\kappa,\tau)$, respectively, of the non-degenerate tangent plane to $\Sigma^{2}$. Then, given $\widehat{\Sigma}^2_0$ a connected component of $\widehat{\Sigma}^2$ and $\{u_1,u_2\}$ a local orthonormal frame on $(\widehat{\Sigma}^{2}_0,\langle,\rangle_R)$, from~\eqref{eq:2.26} and~\eqref{eq:5.2} it holds
\begin{equation}\label{eq:5.4}
	\overline{K}_{R}=\langle\overline{R}_{R}(u_{1},u_{2})u_{1},u_{2}\rangle_{R}=\tau^{2}+(\kappa-4\tau^{2})\langle N_{R},\xi\rangle_{R}^{2}
\end{equation}
along $(\widehat{\Sigma}^2_0,\langle,\rangle_R)$, which as in Section~\ref{Sec:2} is oriented by the unique unit normal vector field $N_R$ such that $\langle N_R,\xi\rangle_R\geq 0$.
Analogously, if $\{v_1,v_2\}$ is a local orthonormal frame on $(\widehat{\Sigma}^{2}_0,\langle,\rangle_L)$ such that $\langle v_{1},v_{1}\rangle_{L}=1$ and $\langle v_{2},v_{2}\rangle_{L}=-\varepsilon$, it also holds along $(\widehat{\Sigma}^2_0,\langle,\rangle_L)$ that
\begin{equation}\label{eq:5.3}
	\overline{K}_{L}=-\varepsilon\langle\overline{R}_{L}(v_{1},v_{2})v_{1},v_{2}\rangle_{L}=-\tau^{2}-\varepsilon(\kappa+4\tau^{2})\langle N_{L},\xi\rangle_{L}^{2},
\end{equation}
$N_L$ being the unit normal vector field on $(\widehat{\Sigma}^2_0,\langle,\rangle_L)$ such that $\langle N_L,\xi\rangle_L\leq 0$.

Let us assume now that $\kappa\neq -4\tau^2$. Then, from~\eqref{eq:2.24},~\eqref{eq:5.4} and~\eqref{eq:5.3}, we obtain the following relation between both sectional curvatures,
\begin{equation}\label{eq:5.5}
	\overline{K}_{R}=\dfrac{1}{\omega_{L}^{2}}\left(\tau^{2}\left(\omega_{L}^{2}-\varepsilon A\right)-\varepsilon A\overline{K}_{L}\right),
\end{equation}
where
\begin{equation}\label{eq:5.5.1}
	A=\dfrac{\kappa-4\tau^{2}}{\kappa+4\tau^{2}}.
\end{equation}
Let us remark that although equations~\eqref{eq:5.4} and~\eqref{eq:5.3} are obtained over any connected component of $\widehat{\Sigma}^2$,~\eqref{eq:5.5} holds in $\Sigma^2$ by a continuity argument.

From now on, we will denote by $K_e^R:=\textrm{det}(A_R)$ and $K_e^L:=\textrm{det}(A_L)$ the extrinsic curvatures of $\psi:\Sigma^2\rightarrow\mathbb{E}^3(\kappa,\tau)$ and $\psi:\Sigma^2\rightarrow\mathbb{L}^3(\kappa,\tau)$, respectively. The next result establishes a relationship between them.

\begin{proposition}\label{prop:5.1}
	Let $\Sigma^{2}$ be a non-degenerate surface in $\mathbb{L}^{3}(\kappa,\tau)$, and let $\widehat{\Sigma}^2_0$ be a connected component of $\widehat{\Sigma}^2$. Then, the extrinsic curvatures $K_e^R$ and $K_e^L$ in $\widehat{\Sigma}^2_0$ are related by
	\begin{equation}\label{eq:5.6}
		K_{e}^{L}=-\dfrac{\varepsilon}{\omega_{R}^{4}}K_{e}^{R}+\dfrac{4\tau\varepsilon}{\omega_{R}^{4}}\left(\langle A_R(T_R),J_R(T_R)\rangle_R+\tau|T_{R}|_{R}^{2}\right).
	\end{equation}
\end{proposition}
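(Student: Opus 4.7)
The plan is to compute $K_e^L=\det(A_L)$ directly from the relation between $A_L$ and $A_R$ given by Proposition~\ref{prop:3.1}, using the basis-invariance of the operator determinant. Since the right-hand side involves $T_R$ and $J_R(T_R)$, the natural strategy is to work first at points where $T_R\neq 0$ and pick a basis adapted to $T_R$.

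Concretely, at each $p$ of the open set $\Sigma_\ast^2=\{p\in\Sigma^2\,;\,T_R(p)\neq 0\}$ I would pick the $\langle,\rangle_R$-orthonormal basis $\{e_1,e_2\}$ defined by $e_1=T_R/|T_R|_R$ and $e_2=J_R(e_1)$, so that $T_R=|T_R|_R\,e_1$ and $J_R(T_R)=|T_R|_R\,e_2$. Writing the symmetric operator $A_R$ as $A_R(e_1)=ae_1+be_2$ and $A_R(e_2)=be_1+ce_2$, one gets $K_e^R=ac-b^2$ and $\langle A_R(T_R),J_R(T_R)\rangle_R=b|T_R|_R^2$. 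Substituting $X=e_1$ and $X=e_2$ into formula \eqref{shape L} then yields the four entries $\alpha_{ij}$ of the matrix of $A_L$ in $\{e_1,e_2\}$ as explicit rational functions of $a,b,c,\tau,|T_R|_R^2$ and $\omega_R$.

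The heart of the computation is the simplification of $K_e^L=\det(A_L)=\alpha_{11}\alpha_{22}-\alpha_{12}\alpha_{21}$. The key algebraic identity is $|T_R|_R^2=(1+\varepsilon\omega_R^2)/2$, which follows immediately from combining \eqref{eq:2.19} and \eqref{eq:2.26}; it produces the crucial cancellation $\omega_R^2-2\varepsilon|T_R|_R^2=-\varepsilon$, which in turn forces the terms proportional to $K_e^R$ to collect with coefficient $-\varepsilon/\omega_R^4$. The remaining cross terms, arising from the $(A_R+\tau J_R)(T_R)$ contribution and the $J_R(T_R)\otimes T_R^{\flat}$ correction in \eqref{shape L}, should regroup, after using the same identity, into the combination $\langle A_R(T_R),J_R(T_R)\rangle_R+\tau|T_R|_R^2$ weighted by $4\tau\varepsilon/\omega_R^4$.

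Finally, the identity extends to the closed complement by continuity. At a point where $T_R=0$ one has $\langle N_R,\xi\rangle_R^2=1$, which forces $\varepsilon=-1$ (so the surface is locally spacelike there) and $\omega_R=1$; moreover, \eqref{shape L} collapses to $A_L=-A_R$, so $K_e^L=\det(A_R)=K_e^R$, matching the right-hand side of the claimed formula, which reduces to $-\varepsilon K_e^R=K_e^R$. I expect the main obstacle to be purely the algebraic bookkeeping in the expansion of $\alpha_{11}\alpha_{22}-\alpha_{12}\alpha_{21}$, where one must carefully track roughly ten monomials in $a,b,c,\tau,|T_R|_R^2,\omega_R$ and apply the identity $|T_R|_R^2=(1+\varepsilon\omega_R^2)/2$ several times before the pattern of the desired formula becomes visible.
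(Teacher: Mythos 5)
Your strategy is essentially the paper's own: both compute $\det(A_L)$ in a $\langle\,,\rangle_R$-orthonormal frame using the operator relation of Proposition~\ref{prop:3.1} together with the identities $\varepsilon\omega_R^2-2|T_R|_R^2=-1$ and $\varepsilon-\omega_R^2+2\varepsilon|T_R|_R^2=2\varepsilon$ coming from~\eqref{eq:2.19} and~\eqref{eq:2.26}; the only difference is that the paper diagonalizes $A_R$ (taking principal directions with $J_R(e_1)=e_2$, so no separate treatment of $T_R=0$ is needed), whereas you adapt the frame to $T_R$ and handle $T_R=0$ by a limiting/direct argument, which is equally valid. One concrete caveat: carrying out your expansion, the cross terms regroup as $\langle A_R(T_R),J_R(T_R)\rangle_R+\tau|T_R|_R^{4}$, not $\tau|T_R|_R^{2}$ as you assert in your last regrouping step -- this agrees with the paper's own intermediate identity~\eqref{eq:5.13}, and the exponent $2$ in the displayed statement~\eqref{eq:5.6} appears to be a typo rather than an error in your method.
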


\begin{proof}
	Given $p\in\widehat{\Sigma}^2_0$, let $\{e_1,e_2\}$ be an orthonormal frame on a neighbourhood $\mathcal{U}$ of $p$ with respect to the metric $\langle,\rangle_R$ diagonalizing $A_R$, i.e. such that $A_R(e_i)=\lambda_i^R e_i$, where $\lambda_i^R$ is a smooth function on $\mathcal{U}$ for $i=1,2$. From Proposition~\ref{prop:3.1} we obtain
	\begin{equation}\label{eq:ALei}
		A_L(e_i)=-\frac{\lambda_i^R}{\omega_R}e_i+\frac{2\varepsilon\lambda_i^R}{\omega^3_R}\langle T_R,e_i\rangle_R T_R+\frac{2\varepsilon\tau}{\omega^3_R}\langle J_R(T_R),e_i\rangle_RT_R-\frac{2\tau}{\omega_R}\langle T_R,e_i\rangle_RJ_R(T_R),
	\end{equation}
	for $i=1,2$. Then, writing $A_L(e_i)=\sum_{j=1}^2 a_{ij}^L e_j$, the coefficients are expressed as
	\begin{equation}\label{eq:5.7}
		a_{ii}^{L}=-\dfrac{\lambda^{R}_{i}}{\omega_{R}}+\dfrac{2\varepsilon\lambda^{R}_{i}}{\omega_{R}^{3}}\langle T_{R},e_{i}\rangle_{R}^{2}+\dfrac{2\tau(\varepsilon-\omega_{R}^{2})}{\omega_{R}^{3}}\langle T_{R},e_{i}\rangle_{R}\langle J_{R}(T_{R}),e_{i}\rangle_{R},
	\end{equation}
	and
	\begin{align}\label{eq:5.8}
		a_{ij}^{L}=&\dfrac{2\varepsilon\lambda^{R}_{i}}{\omega_{R}^{3}}\langle T_{R},e_{i}\rangle_{R}\langle T_{R},e_{j}\rangle_{R}+\dfrac{2\varepsilon\tau}{\omega_{R}^{3}}\langle J_{R}(T_{R}),e_{i}\rangle_{R}\langle T_{R},e_{j}\rangle_{R}\\&-\dfrac{2\tau}{\omega_{R}}\langle T_{R},e_{i}\rangle_{R}\langle J_{R}(T_{R}),e_{j}\rangle_{R},
	\end{align}
	for $i,j=1,2$, $i\neq j$. Therefore, after a straightforward computation we get
	\begin{align}\label{eq:5.9}
		{\rm det}(A_{L})=&\dfrac{\lambda^{R}_{1}\lambda^{R}_{2}}{\omega_{R}^{2}}-\dfrac{2\varepsilon\lambda_{1}^{R}\lambda_{2}^{R}}{\omega_{R}^{4}}|T_{R}|_{R}^{2}-\dfrac{2\tau(\varepsilon-\omega_{R}^{2})}{\omega_{R}^{4}}(\lambda_{1}^{R}-\lambda_{2}^{R})\langle T_{R},e_{1}\rangle_{R}\langle T_{R},e_{2}\rangle_{R}\\
		&-\dfrac{4\varepsilon\tau}{\omega_{R}^{4}}(\lambda_{1}^{R}-\lambda_{2}^{R})\langle T_{R},e_{1}\rangle_{R}\langle T_{R},e_{2}\rangle_{R}|T_{R}|^{2}_{R}
		+\dfrac{4\varepsilon\tau^2}{\omega_{R}^{4}}|T_{R}|_{R}^{4}.
	\end{align}
	
	On the one hand, since $T_{R}=\langle T_{R},e_{1}\rangle_{R}e_{1}+\langle T_{R},e_{2}\rangle_{R}e_{2}$, we have
	\begin{equation}\label{eq:5.10}
		\langle A_{R}(T_{R}),J_{R}(T_{R})\rangle_{R}=-(\lambda_{1}^{R}-\lambda_{2}^{R})\langle T_{R},e_{1}\rangle_{R}\langle T_{R},e_{2}\rangle_{R}.
	\end{equation}
	Thus,
	\begin{align}\label{eq:5.11}
		{\rm det}(A_{L})&=\dfrac{\varepsilon}{\omega_{R}^{4}}\left(\varepsilon\omega_R^{2}-2|T_{R}|_{R}^{2}\right){\rm det}(A_{R})+\dfrac{4\tau^{2}\varepsilon}{\omega_{R}^{4}}|T_{R}|_{R}^{4}\\
		&+\dfrac{2\tau}{\omega_{R}^{4}}\langle A_{R}(T_{R}),J_{R}(T_{R})\rangle_{R}\left(\varepsilon-\omega_{R}^{2}+2\varepsilon|T_{R}|^{2}_{R}\right).
	\end{align}
	On the other hand, equations~\eqref{omega_R} and~\eqref{eq:2.26} imply
	\begin{equation}\label{eq:5.12}
		\varepsilon\omega_{R}^{2}-2|T_{R}|_{R}^{2}=-1 \quad\mbox{and}\quad\varepsilon-\omega_{R}^{2}+2\varepsilon|T_{R}|_{R}^{2}=2\varepsilon,
	\end{equation}
	so that
	\begin{equation}\label{eq:5.13}
		{\rm det}(A_{L})=-\dfrac{\varepsilon}{\omega_{R}^{4}}{\rm det}(A_{R})+\dfrac{4\tau\varepsilon}{\omega_{R}^{4}}\langle A_{R}(T_{R}),J_{R}(T_{R})\rangle_{R}+\dfrac{4\tau^{2}\varepsilon}{\omega_{R}^{4}}|T_{R}|_{R}^{4}.
	\end{equation}
	Hence, we get the desired result.
\end{proof}

Finally, we can also consider the Gaussian curvatures $K_R$ and $K_L$ of $(\Sigma^2,\langle,\rangle_R)$ and $(\Sigma^2,\langle,\rangle_L)$, respectively. Let us recall that the well-known Gauss equations of $(\Sigma^{2},\langle\,,\rangle_R)$ and $(\Sigma^2,\langle\,,\rangle_L)$ are given, respectively, by
\begin{equation}\label{eq:5.14}
	K_{R}=\overline{K}_{R}+K_{e}^{R}\quad\mbox{and}\quad K_{L}=\overline{K}_{L}+\varepsilon K_{e}^{L}.
\end{equation}
Thus, on any connected component $\widehat{\Sigma}^2_0$ of $\widehat{\Sigma}^2$ we obtain from~\eqref{eq:5.4} and~\eqref{eq:5.3} that
\begin{equation}\label{eq:5.18.1}
	K_{R}=\tau^{2}+(\kappa-4\tau^{2})\langle N_{R},\xi\rangle_{R}^{2}+K_{e}^{R},
\end{equation}
and
\begin{equation}\label{eq:5.18.2}
	K_{L}=-\tau^{2}-\varepsilon(\kappa+4\tau^{2})\langle N_{L},\xi\rangle_{L}^{2}+ \varepsilon K_{e}^{L}.
\end{equation}
Furthermore, assuming again $\kappa\neq -4\tau^2$, a straightforward computation from~\eqref{eq:5.5} and~\eqref{eq:5.14} yields
\begin{align}\label{eq:5.15}
	K_{R}&=\dfrac{1}{\omega_{L}^{2}}\left(\tau^{2}(\omega_{L}^{2}-\varepsilon A)-\varepsilon A\overline{K}_{L}\right)+K_{e}^{R}\\
	&=-\dfrac{\varepsilon A}{\omega_{L}^{2}}\overline{K}_{L}+\dfrac{(\omega_{L}^{2}-\varepsilon A)\tau^{2}}{\omega_{L}^{2}}+K_{e}^{R}\\
	&=-\dfrac{\varepsilon A}{\omega_{L}^{2}}({K}_{L}-\varepsilon K_e^L)+\dfrac{(\omega_{L}^{2}-\varepsilon A)\tau^{2}}{\omega_{L}^{2}}+K_{e}^{R}\\
	&=-\dfrac{\varepsilon A}{\omega_{L}^{2}}K_{L}+\dfrac{(\omega_{L}^{2}-\varepsilon A)\tau^{2}}{\omega_{L}^{2}}+\dfrac{1}{\omega_{L}^{2}}\left( A K_{e}^{L}+\omega_{L}^{2}K_{e}^{R}\right).
\end{align}
Equivalently,
\begin{equation}\label{eq:5.16}
	\omega_{L}^{2}K_{R}+\varepsilon A K_{L}=(\omega_{L}^{2}-\varepsilon A)\tau^{2}+\omega_{L}^{2}K_{e}^{R}+ A K_{e}^{L}.
\end{equation}

\section{On the geometry of  non-degenerate anisocurved surfaces}\label{sec:5}

We define a non-degenerate \emph{anisocurved} surface in the homogenous space $\mathbb{L}^3(\kappa,\tau)$ as a non-degenerate surface in $\mathbb{L}^3(\kappa,\tau)$ such that it has opposite Gaussian curvature functions $K_R$ and $K_L$ when considered it as an immersion into $\mathbb{E}^3(\kappa,\tau)$ and $\mathbb{L}^3(\kappa,\tau)$, respectively.

As an application of the relations obtained in Section~\ref{sec:curv}, we are presenting some results concerning the geometry of non-degenerate anisocurved surfaces in $\mathbb{L}^3(\kappa,\tau)$, under some extra suitable assumptions on their extrinsic curvatures.

Firstly, let us consider the case where $\Sigma^2$ is a timelike surface in $\mathbb{L}^3(\kappa,\tau)$. Before giving our main results, let us study some particular examples of anisocurved timelike surfaces.

\begin{example}\label{ex:anisocurved}
	Let $\Sigma^2$ be a timelike helix surface immersed into $\mathbb{L}^3(\kappa,\tau)$, and let us compute its Gaussian and extrinsic curvatures in $\mathbb{E}^3(\kappa,\tau)$ and $\mathbb{L}^3(\kappa,\tau)$.
	
	Since $\Sigma^2$ is timelike, $T_R$ is non-zero at any $p\in\Sigma^2$, so we can consider $\{e_1,e_2\}$ a local orthormal frame on $\mathfrak{X}(\Sigma)$ such that $e_1=\frac{T_R}{|T_R|}$ and $e_2=J_R(e_1)$. Furthermore, by Corollary~\ref{cor:constantangle}, $\Sigma^2$ is also an helix surface in $\mathbb{E}^3(\kappa,\tau)$, so $\langle N_R,\xi\rangle_R$ is constant and, consequently,~\eqref{eq:3.11} implies $A_R(T_R)=-\tau J_R(T_R)$. Therefore, $\langle A_R(e_1),e_1\rangle_R=0$ and $\langle A_R(e_1),e_2\rangle_R =-\tau$, so $K_e^R=\textrm{det}(A_R)=-\tau^2$. Analogously, we can easily compute $K_e^L=\textrm{det}(A_L)=\tau^2$. Thus, $K_e^R=-K_e^L=-\tau^2$.  Finally, from~\eqref{eq:5.18.1} and~\eqref{eq:5.18.2} we get $K_R=(\kappa-4\tau^2)\langle N_R,\xi\rangle_R^2$ and $K_L=-(\kappa+4\tau^2)\langle N_L,\xi\rangle_L^2$.
	
	Let us observe that, in particular, Hopf surfaces are anisocurved surfaces satisfying $K_R=-K_L=0$. On the other hand, if $\langle N_R,\xi\rangle_R$ is a non-zero constant, $\Sigma^2$ is an anisocurved timelike helix surface if and only if $(\kappa+4\tau^2)\langle N_L,\xi\rangle_L^2=(\kappa-4\tau^2) \langle N_R,\xi\rangle_R^2$. Or equivalently, in the case $\kappa\neq -4\tau^2$, if and only if $\omega^2_L=A$.
\end{example}

In our main first result we get a nice characterization of Hopf surfaces.

\begin{theorem}\label{teo:5.timelike}
	The only timelike anisocurved surfaces immersed into $\mathbb{L}^{3}(\kappa,\tau)$ with $\kappa+4\tau^{2}>0$ and satisfying $K^{R}_{e}\leq -K^{L}_{e}$ are open pieces of Hopf surfaces.
\end{theorem}

\begin{proof}
	We claim that if $\Sigma^2$ is a timelike surface immersed into $\mathbb{L}^3(\kappa,\tau)$ satisfying the assumptions of the theorem, it verifies $K^R_e=-K^L_e$.
	
	In fact, since $\Sigma^2$ is timelike and anisocurved, $\varepsilon=1$ and $K_R=-K_L$, so we get from~\eqref{eq:5.18.1} and~\eqref{eq:5.18.2} that
	\begin{equation}\label{eq:aux_thm5.1}
		0=(\kappa-4\tau^2)\langle N_R,\xi\rangle^2_R-(\kappa+4\tau^2)\langle N_L,\xi\rangle^2_L+K_e^R+K_e^L,
	\end{equation}
	on any connected component $\widehat{\Sigma}_0^2$ of $\widehat{\Sigma}^2$.
	Observe now that identity~\eqref{eq:2.24} implies that $\langle N_L,\xi\rangle_L^2=\omega_L^2\langle N_R,\xi\rangle_R^2\geq \langle N_R,\xi\rangle_R^2$ and recall that by assumption $\kappa+4\tau^2> 0$. Therefore,~\eqref{eq:aux_thm5.1} derives
	\begin{equation}\label{eq:aux2_thm5.1}
		0\leq -8\tau^2\langle N_R,\xi\rangle^2_R+K_e^R+K_e^L,
	\end{equation}
	so $K_e^R\geq -K_e^L$ in $\widehat{\Sigma}^2$, and by continuity in $\Sigma^2$. Thus, the claim follows from the assumptions of the theorem.
	
	Therefore, denoting $K=K_R=-K_L$ and $K_e=K_e^R=-K_e^L$, equation~\eqref{eq:5.16} becomes
	\begin{equation}\label{eq:5.16_b}
		(\omega_{L}^{2}-A)K=(\omega_{L}^{2}-A)\tau^{2}+(\omega_{L}^{2}-A)K_{e}.
	\end{equation}
	Consequently, either $\omega_{L}^{2}=A$ or $K=\tau^{2}+K_{e}$. In the first case, from~\eqref{eq:2.7} and~\eqref{eq:5.5.1} we derive
	\begin{equation}\label{eq:aux3_thm5.1}
		\langle N_{L},\xi\rangle_{L}^{2}=-\dfrac{4\tau^{2}}{\kappa+4\tau^{2}}\leq 0,
	\end{equation}
	so necessarily $\tau=0$ and $\langle N_L,\xi\rangle_L=0$. Otherwise, $K=\tau^{2}+K_{e}$. Then~\eqref{eq:5.18.2} also yields $\langle N_{L},\xi\rangle_{L}=0$. Consequently, from~\eqref{eq:2.24} it also holds $\langle N_R,\xi\rangle_R=0$, so $\Sigma^{2}$ is locally isometric to a Hopf surface.
\end{proof}

In the case where $\kappa+4\tau^2<0$ and $K^R_e=-K^L_e$, we can also characterize timelike anisocurved helix surfaces.

\begin{proposition}\label{prop:5.timelike}
The only timelike anisocurved surfaces immersed into $\mathbb{L}^{3}(\kappa,\tau)$ with $\kappa+4\tau^{2}<0$ and satisfying $K^{R}_{e}= -K^{L}_{e}$ are open pieces of Hopf surfaces or of timelike helix surfaces such that $\omega^2_L=A$.
\end{proposition}
\begin{proof}
	Proceeding as in Theorem~\ref{teo:5.timelike}, either $\omega^2_L=A$ or $K=\tau^2+K_e$, concluding in both cases that $\langle N_L,\xi\rangle_L$ is constant, so by definition $\Sigma^2$ has constant angle in $\mathbb{L}^3(\kappa,\tau)$. The result follows from Example~\ref{ex:anisocurved}.
\end{proof}

Let us see now that we can obtain nice consequences of the above theorems when considering particular cases of ambient spaces. On the one hand, in the case where $\mathbb{L}^3(\kappa,\tau)$ is a Lorentzian Berger sphere or the Lorentzian Heisenberg space, the condition $\kappa+4\tau^{2}>0$ is trivially satisfied. Therefore,
\begin{corollary}\label{cor:5.1}
	The only timelike anisocurved surfaces immersed into the Lorentzian Berger sphere $\mathbb{S}^{3}_{b,1}(\kappa,\tau)$ or into the Lorentzian Heisenberg space $Nil^3_1(\tau)$ satisfying $K^{R}_{e}\leq -K^{L}_{e}$ are open pieces of Hopf surfaces.
\end{corollary}

On the other hand, observe that the condition $\kappa+4\tau^2<0$ can only be satisfied for the product spaces $M^2(\kappa)\times\mathbb{R}$ with $\kappa<0$ and for the universal cover of the special linear group, $\widetilde{Sl_2}(\mathbb{R})(\kappa,\tau)$ when $\kappa<-4\tau^2$. Thus,
\begin{corollary}\label{cor:5.3}
	The only timelike anisocurved surfaces immersed into the universal cover of the Lorentzian special linear group $\widetilde{Sl_{2,1}}(\mathbb{R})(\kappa,\tau)$, with $\kappa<-4\tau^2$, and satisfying $K_e^R=-K_e^L$ are open pieces of a Hopf surface or of a timelike helix surface such that $\omega_L^2=A$.
\end{corollary}

As a final particular ambient space, let us consider the case $\tau=0$, i.e. let us assume that the ambient is a product space $M^2(\kappa)\times\mathbb{R}_1$. Let us observe that Hopf surfaces in $M^2(\kappa)\times\mathbb{R}_1$ are just cylinders over a regular curve in $M^2(\kappa)$, and from Example~\ref{ex:anisocurved} they satisfy $K_R=K_L=K_e^R=K_e^L=0$. It is worth pointing out that Barbosa and do Carmo provided in~\cite{Barbosa:20} a really nice characterization of complete cylinders in the Riemannian product $\mathbb{H}^2(\kappa)\times\mathbb{R}$ as the only complete and connected surfaces such that $K=K_R=0$.  Let us see first that it is possible to characterize cylinders in a similar way as the results above.

\begin{corollary}
	The only timelike anisocurved surfaces immersed into the Lorentzian product space $M^{2}(\kappa)\times\mathbb{R}_{1}$, $\kappa>0$ ($\kappa<0$, respectively), such that $K^{R}_{e}\leq -K^{L}_{e}$ ($K^{R}_{e}\geq -K^{L}_{e}$, respectively) are open pieces of cylinders over a regular curve of $M^{2}(\kappa)$.
\end{corollary}

\begin{proof}
	The case $\kappa>0$ follows immediately from Theorem~\ref{teo:5.timelike}. In the case $\kappa<0$, we can proceed in an analogous way as in the proof of Theorem~\ref{teo:5.timelike} to conclude that $K^R_e=-K^L_e$, and then the result follows from Proposition~\ref{prop:5.timelike} since $A=1$ for any $\mathbb{L}^3(\kappa,0)$.
\end{proof}

Moreover, we can also characterize cylinders in $M^2(\kappa)\times\mathbb{R}_1,$ with $\kappa\neq 0$, as the only non-degenerate anisocurved surfaces with constant Gaussian curvature.

\begin{theorem}\label{prop:5.2}
	Let $\Sigma^2$ be a non-degenerate anisocurved surface immersed into the Lorentzian product space $M^{2}(\kappa)\times\mathbb{R}_1$, with $\kappa\neq 0$. Then,
	\begin{equation}\label{eq:K_prod}
		K=\frac{\kappa\,(\omega^2_L-\varepsilon)}{2\,(\omega^2_L+\varepsilon)},
	\end{equation}	
where $K=K_R=-K_L$. Furthermore, $K$ is constant if and only if $\Sigma^2$ is a piece of a cylinder over a regular curve of $M^2(\kappa)$.
\end{theorem}
\begin{proof}
	Since $\tau=0$, $A=1$, and from Proposition~\ref{prop:5.1} we get $K_e^R=-\varepsilon \omega^4_R K_e^L$. Thus, equation~\eqref{eq:5.16} reads
	\begin{equation}\label{eq:nuevo_prod}
		\frac{1}{\omega^2_L}\left(\omega^2_L-\varepsilon\right)\left(\omega^2_LK-K_e^L\right)=0.
	\end{equation}
Consequently, either $\omega^2_L=\varepsilon$ or $K_e^L=\omega^2_L K$. However, the first situation can only hold when $\varepsilon=1$ and $\omega^2_L=1$, or equivalently $\langle N_L,\xi\rangle_L=0$. Consequently, $\Sigma^2$ is a piece of a cylinder over a regular curve of $M^2(\kappa)$. Furthermore, since in this case $K=K_e^R=0$, it is also satisfied that  $K_e^L=\omega^2_L K$. Thus, the second identity necessarily holds, so~\eqref{eq:5.18.2} becomes
\begin{equation}\label{eq:nuevo_prod_2}
	K=\varepsilon\kappa\langle N_L,\xi\rangle_L^2-\varepsilon \omega^2_L K,
\end{equation}	
and from~\eqref{eq:2.7} it immediately yields that
\begin{equation}\label{eq:nuevo_prod_3}
	2K(\omega^2_L+\varepsilon)=\kappa(\omega^2_L-\varepsilon).
\end{equation}
Finally,~\eqref{eq:K_prod} follows by observing that, under the assumptions of the theorem, $\omega_L^2+\varepsilon$ cannot vanish. In fact, it could only vanish if $\Sigma^2$ were spacelike and $\omega_L^2=1$, but in this case $\Sigma^2$ would be a slice from~\eqref{eq:2.7}. However, slices satisfy $K_R=K_L=\kappa$, so they are not anisocurved surfaces except when $\kappa=0$.

In order to prove the last assertion of the theorem, we just have to observe that if $K$ is constant, $\omega_L$ is also constant, so $\Sigma^2$ is a non-degenerate anisocurved helix surface in $M^2(\kappa)\times\mathbb{R}_1$. Then, following a similar reasoning as in Example~\ref{ex:anisocurved}, $K_e^R=K_e^L=0$ and $\langle N_L,\xi\rangle_L^2=\langle N_R,\xi\rangle_R^2$, so from~\eqref{eq:2.24} $\omega_L^2=1$ and $\Sigma^2$ is necessarily a piece of a cylinder.
\end{proof}	

As it has been remarked in the proof of Theorem~\ref{prop:5.2}, slices are anisocurved surfaces if and only if $M^2(\kappa)\times\mathbb{R}_1=\mathbb{L}^3$. This fact motivates the following non-existence result for spacelike surfaces.

\begin{theorem}\label{teo:5.spacelike}
	There do not exist any anisocurved spacelike surface in $\mathbb{L}^3(\kappa,\tau)$, $\kappa>0$, such that $K_e^R\geq K_e^L$.
\end{theorem}
\begin{proof}
	Let us assume that there exists a spacelike surface $\Sigma^2$ in $\mathbb{L}^3(\kappa,\tau)$ under the assumptions of the theorem. Then, following an analogous argument as in the proof of Theorem~\ref{teo:5.timelike} we can conclude that $K_e^R=K_e^L$. In fact, since $\Sigma^2$ is spacelike we have $\varepsilon=-1$, and we can easily derive from the assumptions of the theorem,~\eqref{eq:5.18.1} and~\eqref{eq:5.18.2} that
	\begin{equation}\label{eq:aux1_thm5.2}
		2\kappa \langle N_R,\xi\rangle_R^2+K_e^R-K_e^L \leq 0,
	\end{equation}	
	so $K_e^R\leq K_e^L$, which jointly with the assumption of the theorem implies $K_e^R=K_e^L$.
	
	Taking into account now that $\varepsilon=-1$, $K_R=-K_L=K$ and $K_e^R=K_e^L=K_e$, equation~\eqref{eq:5.16} becomes
	\begin{equation}\label{eq:aux2_thm5.2}
		(\omega_L^2+A)(K-\tau^2-K_e)=0.
	\end{equation}
	On the one hand, condition $K=\tau^2+K_e$ cannot hold since it would imply $\Sigma^2$ being a Hopf surface, and therefore timelike. On the other hand, if $\omega^2_L+A=0$, then
	\begin{equation}
		\omega_L^2=-A=\frac{4\tau^2-\kappa}{4\tau^2+\kappa},
	\end{equation}
	which contradicts the fact that $\omega_L^2\geq 1$. Thus,~\eqref{eq:aux2_thm5.2} leads to a contradiction and the result follows.
\end{proof}

\section*{Acknowledgements}
The first author is partially supported by the Spanish MICINN project PID2021-126217NB-I00. The second author is also partially supported by CNPq, Brazil, under the grant 311124/2021-6 and Propesqi (UFPE).

\bibliographystyle{amsplain}

\bigskip

\address{
	Departamento de Matem\'aticas \\
	Universidad de C\'ordoba \\
	14071 Campus de Rabanales, C\'ordoba\\
	Spain
}
{alma.albujer@uco.es}

\address{
	Departamento de Matem\'atica \\
	Universidade Federal de Pernambuco \\
	50.740-540 Recife, Pernambuco\\
	Brazil
}
{fabio.reis@ufpe.br}
\end{document}